\newtheorem{remark}{Remark}
\newcommand{\argmin}{\operatornamewithlimits{argmin}}
\newcommand{\eps}{\epsilon}
\def\O{\Omega}
\def\D{\mathcal{D}}
\def\F{\mathcal{F}}
\def\P{\mathbb{P}}
\def\E{\mathbb{E}}
\def\R{\mathbb{R}}
\def\({\left(}
\def\){\right)}
\def\div{\mbox{div}}
\begin{document}

\title{Finite Element Methods for the Stochastic Allen-Cahn Equation with Gradient-type 
Multiplicative Noises}
\markboth{XIAOBING FENG AND YUKUN LI AND Yi Zhang}{FEMs for a Stochastic Allen-Cahn Equation}

\author{
Xiaobing Feng\thanks{Department of Mathematics, The University of Tennessee,
Knoxville, TN 37996, U.S.A. ({\tt xfeng@math.utk.edu.}) The work of this author was partially supported 
by the NSF grant DMS-1318486.}
\and
Yukun Li\thanks{Department of Mathematics, The University of Tennessee,
Knoxville, TN 37996, U.S.A. ({\tt yli@math.utk.edu.}) }
\and
Yi Zhang\thanks{Department of Mathematics, The University of Tennessee, 
Knoxville, TN 37996, U.S.A.  ({\tt yzhan112@utk.edu.}) }
}

\maketitle

\begin{abstract}
This paper studies finite element approximations of the stochastic Allen-Cahn equation with 
gradient-type multiplicative noises that are white in time and correlated in space.  
The sharp interface limit as the parameter $\epsilon \rightarrow 0$ of the stochastic 
equation formally approximates a stochastic mean curvature flow which is described by 
a stochastically perturbed geometric law of the deterministic mean 
curvature flow. Both the stochastic Allen-Cahn equation and the stochastic mean curvature 
flow arise from materials science, fluid mechanics and cell biology applications.
Two fully discrete finite element methods which are based on different time-stepping 
strategies for the nonlinear term are proposed. Strong convergence with sharp rates for both fully 
discrete finite element methods is proved. This is done with a crucial help of the H\"{o}lder continuity
in time with respect to the spatial $L^2$-norm and $H^1$-seminorm for the strong solution 
of the stochastic Allen-Cahn equation, which are
key technical lemmas proved in paper. It also relies on the fact that high moments of the 
strong solution are bounded in various spatial and temporal norms.
Numerical experiments are provided to gauge the performance of the proposed fully discrete 
finite element methods and to study the interplay of the geometric evolution and gradient-type noises.
\end{abstract}

\begin{keywords}
Stochastic Allen-Cahn equation, stochastic mean curvature flow, gradient-type multiplicative noises, 
phase transition, fully discrete finite element methods, error analysis.
\end{keywords}

\begin{AMS}
65N12, 
65N15, 
65N30, 
\end{AMS}

\section{Introduction}
The Allen-Cahn equation refers to the following singularly perturbed heat equation:
\begin{align}\label{dac}
u_t - \Delta u + \frac{1}{\epsilon^2} f(u) = 0 \quad \text{in} \ \D_T : = \D \times (0,T),
\end{align}
where $\D \subseteq \R^d$ ($d = 2,3$) is a bound domain, $T > 0$ is a fixed constant. 
Equation \eqref{dac} was proposed by Samuel M. Allen and John. W. Cahn in \cite{AC1979} as a 
mathematical model to describe the phase separation process of a binary alloy quenched at a 
fixed temperature. In the equation, $u$ represents the concentration of one species of
the binary alloy mixture. The positive small parameter $\epsilon$ is known as the interaction 
length of the transition region between the domains $\{ x \in \D: u(x,t) \approx -1 \}$ 
and $\{ x \in \D: u(x,t) \approx 1 \}$, and $f$ is the derivative 
of a smooth double equal well density function $F$ taking its global 
minimum $0$ at $u = \pm 1$. A typical example is
\begin{align}\label{fF}
f(u) = u^3 - u \qquad \text{and} \qquad F(u) = \frac{1}{4} (u^2 - 1)^2,
\end{align}
which will be used in this paper. We note that $t$ in the equation is a fast time, 
representing $\frac{t}{\epsilon^2}$ in the original Allen-Cahn equation. 

To uniquely determine a solution of the Allen-Cahn equation, a boundary and a initial condition
must be prescribed. In this paper we shall consider the following set of the boundary and 
initial conditions:
\begin{alignat}{2} \label{dac-b}
\frac{\partial u}{\partial n} &= 0 &&\quad \text{in} \ \partial \D_T:= \partial \D \times (0,T), \\
\label{dac-i}
u &= u_0 &&\quad  \text{in} \ \D \times \{ 0 \},
\end{alignat}
where $n$ denotes the unit outward normal to $\partial \D$ and $\frac{\partial u}{\partial n}
=\nabla u\cdot n$ denotes the normal derivative of $u$. 

Besides the important role it plays in materials science, the Allen-Cahn equation has also be well 
known because of its connection to a well-known geometric moving interface problem called the mean 
curvature flow, which is governed by the following geometric law:
\begin{align}
\label{dmcf}
V_n(x,t) = -H(x,t) \qquad x \in \Gamma_t,
\end{align}
where $V_n(\cdot,t)$ and $H(\cdot,t)$ respectively stand for the normal velocity and the 
mean curvature of $\Gamma_t$ at the space-time point $(x,t)$. It was proved in \cite{MS1990, ESS1992, I1993} 
that when $\epsilon \to 0$, the zero-level set $\Gamma^{\epsilon}_t := \{ x \in \D: u(x,t) = 0 \}$ 
of the solution of problem \eqref{dac}--\eqref{dac-i} converges to the mean curvature 
flow $\{\Gamma_t\}_{t\geq 0}$. Furthermore, because of the above property the Allen-Cahn equation 
has become a fundamental equation and a building block in the phase field methodology for 
general moving interface problems \cite{AMW98,FHL06}. We refer to \cite{FP2003,NV1997}
and the references therein for an account on numerical methods for problem 
\eqref{dac}--\eqref{dac-i}. 

For application problems of the mean curvature flow, there may exist uncertainty which arises 
from various sources such as thermal fluctuation, impurities of the materials and the 
intrinsic instabilities of the deterministic evolutions. Therefore, it is interesting and necessary 
to consider the stochastic effects, and to study the impact of the noises on regularities 
of the solutions and their longtime behaviors. This leads to considering the following 
stochastically perturbed mean curvature flow \cite{KO1982, RW2013, Y1998}:
\begin{align}
\label{smcf}
V_n(x,t) = -H(x,t) + \delta \mathop{\mathbb{X}}^{\circ}(x, t) \cdot n \qquad x \in \Gamma_t,
\end{align}
where $\mathbb{X}:\mathbb{R}^d\times [0,T]\to \mathbb{R}^d$ and $\stackrel{\circ}{\mathbb{X}}$ 
denotes the Stratonovich derivative of $\mathbb{X}$, $\delta > 0$ is the noise intensity 
that controls the strength of the noise. Formally, the phase field formulation of \eqref{smcf} is 
given by the following stochastic Allen-Cahn equation \cite{RW2013}:
\begin{align}
\label{sac_general}
\frac{d u}{d t}  = \Delta u - \frac{1}{\epsilon^2} f(u)   
+ \delta \nabla u \cdot \mathop{\mathbb{X}}^\circ,
\end{align}
More specifically, we assume $\mathbb{X}$ is a vector field-valued Brownian motion that is white 
in time and smooth in space, i.e., 
\begin{align}
\mathbb{X} (x,t) = X(x) W(t),
\end{align}
where $X: \mathbb{R}^d \longrightarrow \mathbb{R}^d$ is a time independent deterministic 
smooth vector field with compact support in $\D$ and $W(t)$ is a standard $\R$-valued Brownian 
motion (i.e., Wiener process) on a given filtered probability space $(\O,\F,\{ \F_t: t \geq 0 \},\P)$. 
Thus the Stratonovich stochastic partial differential equation (SPDE) \eqref{sac_general} 
becomes
\begin{align}
\label{sac_s}
d u = \Bigl[ \Delta u - \frac{1}{\epsilon^2} f(u) \Bigr] \, dt + \delta \nabla u \cdot X \circ d W(t),
\end{align}
and the corresponding It\^{o} SPDE is given by
\begin{align}
\label{sac}
d u &= \Bigl[ \Delta u - \frac{1}{\epsilon^2} f(u) 
+ \frac{\delta^2}{2} \nabla (\nabla u \cdot X) \cdot X \Bigr] \, dt + \delta \nabla u \cdot X \, d W(t) \\
&=\Bigl[ \Delta u - \frac{1}{\epsilon^2} f(u) 
+\frac{\delta^2}{2} (B : D^2 u + b \cdot \nabla u) \Bigr] \, dt + \delta \nabla u \cdot X \, d W(t) \notag \\
&=\Bigl[ \Delta u - \frac{1}{\epsilon^2} f(u) + \frac{\delta^2}{2} \big(\mbox{div}(B \nabla u) 
+ (b - \text{div}B) \cdot \nabla u\big) \Bigr] \, dt \notag \\
&\qquad + \delta \nabla u \cdot X \, d W(t), \notag
\end{align}
where $B = X \otimes X \in  \R^{d \times d}$ with $B_{ij} = X_i X_j$, $b = (\nabla X) X \in \R^d$ 
with $b_j = \nabla X_j\cdot X$ and $\text{div} B - b = (\text{div} X) X$. Note that the It\^{o} 
SPDE \eqref{sac} has two extra terms which are absorbed in the Stratonovich SPDE \eqref{sac_s}.
The stochastic Allen-Cahn equation \eqref{sac} will be complemented by the boundary and initial 
conditions \eqref{dac-b} and \eqref{dac-i}.

\begin{remark}
We can also consider the following more general vector field \cite{RW2013}
\begin{align*}
\mathbb{X}(x,t) = \sum_{k=1}^m X^k(x) W_k(t),
\end{align*}
where $m$ is a positive integer, $W_k(t)$ are independent $\R$-valued standard Brownian motions,
and $X^k:\mathbb{R}^d \longrightarrow \mathbb{R}^d$ is a time independent deterministic
smooth vector field with compact support in $\D$ for $k=1,2,\cdots, m$.
In this case, the corresponding $B$ and $b$ in the SPDE \eqref{sac} are given by
\begin{align*}
B = \sum_{k=1}^m X^k \otimes X^k \qquad \text{and} \qquad b = \sum_{k=1}^m (\nabla X^k) X^k.
\end{align*}
We remark that the results of this paper can be easily extended to the case with the above
general $\mathbb{X}$.
\end{remark}

In this paper we assume that
\begin{align}
\label{assump-i1}
u_0 &\in C^{\infty}(\bar{\D}), \\
\label{assump-i3}
X &\in C_0^{4,\beta_0}(\D) 
\end{align}
for some $\beta_0 \in (0,1]$. It was proved in \cite[Theorem 4.1]{RW2013} that 
if the domain $\D$ is smooth and the assumptions \eqref{assump-i1}--\eqref{assump-i3} hold, 
there exists a unique strong solution $u$ such that
\begin{align}\label{s_solu-1}
u(x,t) &= u_0(x) + \int_0^t \Bigl[ \Delta u(x,s) - \frac{1}{\epsilon^2} f(u(x,s)) 
+ \frac{\delta^2}{2} \Bigl(B: D^2 u(x,s) \\
&\quad + b \cdot \nabla u(x,s) \Bigr) \Bigr]  \, ds 
+ \int_0^t \delta \nabla u(x,s) \cdot X(x) \, d W(s), \notag \\
\label{s_solu-2}
\frac{\partial u}{\partial n} &= 0 \qquad \text{on} \ \partial \D_T
\end{align}
hold $\P$-almost surely. Moreover, $u(\cdot,t)$ is a continuous $C^{3,\beta}(\bar{\D})$-semimartingale 
for any $0< \beta < \beta_0$. Furthermore, for any multi-index $|\sigma| \leq 3$ and $p \geq 1$, 
there exists a positive constant $C_0 = C (p, \delta,\epsilon)$ such that  
the $p$-th moments of the spatial derivatives of $u$ satisfy
\begin{align}\label{p-bound}
\sup_{t \in [0,T]} \E \Bigl[ \sup_{x \in \D} |\partial^{\sigma} u(x,t) |^p \Bigr] < C_0,
\end{align}
where and throughout this paper $\E[\cdot]$ denotes the expectation operator,

We note that other solution concepts for the SPED problem \eqref{sac}, 
\eqref{dac-b}--\eqref{dac-i}  were also studied in the literature, this 
includes mild solutions based on the semigroup approach \cite{DPZ1992} 
and variational solutions \cite{KR2007} based on the variational approach. 
In this paper we shall assume that there exists a unique strong solution to problem \eqref{sac}, 
\eqref{dac-b}--\eqref{dac-i} satisfying \eqref{s_solu-1}--\eqref{p-bound}. It is clear that the strong solution 
$u(\cdot,t)$ is also an adapted $H^1(\D)$-valued process such that for any $t \in (0,T]$ there holds 
$\P$-almost surely 
\begin{align}
\label{var_solu}
(u(t),\phi) &= (u_0,\phi) - \int_0^t \Bigl( \big(I + \frac{\delta^2}{2} B \big) 
\nabla u(s), \nabla \phi \Bigr) \, ds \\
   &\quad - \frac{1}{\epsilon^2} \int_0^t  \big( f(u(s)),\phi \big) \, ds  
- \frac{\delta^2}{2} \int_0^t \big( (\text{div}B - b)\cdot \nabla u(s), \phi \big)  \, ds \notag \\
   &\quad + \delta \int_0^t (\nabla u(s) \cdot X,\phi) \, d W(s) \qquad \forall \, \phi \in H^1(\D), \notag
\end{align}
where $(\cdot,\cdot)$ denotes the standard inner product on $L^2(\D)$. 

In addition to the results about the well-posedness of strong solutions and the regularity estimates 
given by \eqref{p-bound},  the tightness of the solution in the sharp interface limit as $\epsilon\to 0$  
was also established in \cite{RW2013} for the stochastic Allen-Cahn problem \eqref{sac}, 
\eqref{dac-b}--\eqref{dac-i}. 
However, the rigorous justification of the convergence of the zero level set of the solution $u$  
to the stochastic mean curvature flow \eqref{smcf} is still missing. In \cite{BDP1995, F1995}, 
the stochastic Allen-Cahn equation in one dimension with additive space-time white noise was studied 
and the sharp interface limit was established. Finite element methods for this one dimensional model 
was also proposed in \cite{KKL2007}. But in higher dimensions, it requires spatial correlations 
for the noise otherwise the space-time white noise is too rough to allow the existence of solutions 
in reasonable function spaces. In \cite{FLP2014}, finite element methods for the stochastic mean 
curvature flow of planar curves of graphs were investigated, where the SPDE is quasilinear and arises 
from the level set formulation of the mean curvature flow. However, it is not clear if the results of
\cite{FLP2014} can be extended to higher dimensions. Very recently, finite element methods for a 
stochastic Allen-Cahn equation with Lipschitz continuous multiplicative noises were proposed 
in \cite{P2015}, the strong convergence with suboptimal rates was proved. 

The primary goal of this paper is to develop and analyze two fully discrete finite element 
methods for approximating the strong solution of the stochastic Allen-Cahn problem \eqref{sac},
\eqref{dac-b}--\eqref{dac-i} and to establish strong convergence with optimal 
rates for the proposed fully discrete finite element methods. Our strong convergence 
result will be established under the assumption that the strong solution $u(\cdot,t) \in W^{s,\infty}(\D)$ 
($s \geq 3$) for any $t \in [0,T]$ and it satisfies the following estimates:
\begin{align}
\label{A:Reg}
\sup_{t \in [0,T]} \E \Bigl[ \|u\|^p_{W^{s,\infty}(\D)} \Bigr] \leq C_0 = C(p,\delta,\epsilon) 
\qquad\forall p\geq 1.
\end{align}
We note that the above assumption is reasonable in view of \eqref{p-bound}.
Since we are interested in the case where $\epsilon$ is small, we shall make an effort to track 
the dependence of all constants on the parameter $\epsilon$ in our convergence analysis. 
We remark that for the deterministic Allen-Cahn equation, using a nonstandard technique it was 
proved that the error estimates depend on $\frac{1}{\epsilon}$ only in 
some lower polynomial order \cite{FP2003, FL2014}. Such error estimates can be used to 
prove convergence of the numerical interface to the mean curvature flow. However, it is not clear 
whether the techniques used in \cite{NV1997,FP2003, FL2014} can be extended to the stochastic 
case. As a result, our error estimates are shown to depend on $\frac{1}{\epsilon}$ exponentially. 
Another goal of this paper is to use the stochastic Allen-Cahn equation as a testbed to
develop numerical analysis techniques and machineries which hopefully are applicable 
to other stochastic PDEs. To the best of our knowledge, there is no numerical analysis
result for nonlinear stochastic second order PDEs with gradient-type multiplicative noises in the literature,
this is partially because the gradient-type noises in a SPDE like \eqref{sac} give new challenges 
for error estimates. More specifically, such a multiplicative noise introduces additional 
diffusion and convection terms to the PDE, and the resulting SPDE could be convection-dominated
if the multiplicative noise is too strong. To avoid the latter situation, we shall assume that 
$\delta$ is not too large in \eqref{sac}.

The remaining of the paper is organized as follows. We present in Section~\ref{sec:pre} some 
technical lemmas which play a crucial role in our convergence analysis. In Section~\ref{sec:fem}, 
we propose two fully discrete finite element methods for the stochastic Allen-Cahn problem \eqref{sac},
\eqref{dac-b}--\eqref{dac-i}. These two methods differ only on how the nonlinear term is discretized 
in time, which leads to different conditions on the time step $\tau$ for the well-posedness and 
error estimates. We establish the strong convergence with optimal rates for both fully discrete
finite element methods.  Finally, in Section~\ref{sec:numer} we present some numerical 
experiments to illustrate the performance of the proposed numerical methods and to verify 
the theoretical error estimates obtained in Section~\ref{sec:fem} as well as to examine the 
dynamics of the numerical interfaces, in particular, pertaining to the noise strength.

\section{Preliminary results}\label{sec:pre}
Standard space notation will be used in this paper \cite{BS2008}, in particular, we point out that
$H^k(\D)$ for $k\geq 0$ denotes Sobolev space of order $k$ and 
$(\cdot, \cdot)$ denotes the standard inner product of $L^2(\D)$. Throughout the paper $C$ will 
be used to denote a generic positive constant independent of the parameters $\epsilon$, $\delta$, 
and the space and time mesh sizes $h$ and $\tau$, which can take different values at different occurrences. 

The goal of this section is to establish several technical lemmas that will be crucially
used in Section~\ref{sec:fem}. Each of these lemmas shows a certain property of
the strong solution $u$ of the stochastic Allen-Cahn problem \eqref{sac},
\eqref{dac-b}--\eqref{dac-i}, such as $(\epsilon, \delta)$-explicit energy estimates
and H\"older continuity in time with respect to the $L^2(\D)$- and $H^1(\D)$-norm.

We begin with the following uniform estimate for the expectation of the $p$-th moment of the
Cahn-Hilliard energy functional
\begin{equation*}
J(v) := \int_{\D} \Bigl( \frac{1}{2} |\nabla v|^2 + \frac{1}{\epsilon^2} F(v) \Bigr) \, dx.
\end{equation*}
A similar result can be found in \cite{RW2013} without tracking the dependence on 
parameters in the estimate. We include a proof here for completeness.

\begin{lemma}\label{lem:energy}
Let $u$ be the strong solution to problem \eqref{sac}, \eqref{dac-b}--\eqref{dac-i}. 
There holds for any $p \geq 1$ 
\begin{equation}
\label{energy}
\sup_{t \in [0,T]} \E \bigl[ J(u(t))^p \bigr] 
+ \E \left[ \int_0^t p J(u(s))^{p-1} \bigl\| w(s) \bigr\|^2_{L^2(\D)} \, ds \right] \leq C_1,
\end{equation}
where 
\[
w:= - \Delta u + \frac{1}{\epsilon^2} F'(u)\qquad
\mbox{and}\qquad  
C_1 = e^{C \delta^2 p^2 \|X\|_{C^2(\bar{\D})} T} \bigl[ J(u_0) \bigr]^p.
\]
\end{lemma}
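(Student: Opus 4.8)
The plan is to apply the It\^o formula to the energy functional $J(u(t))$ and then to its $p$-th power $J(u(t))^p$, exploiting that the leading deterministic part of the Allen-Cahn dynamics is a gradient flow for $J$, so that the principal term $-\|w\|^2_{L^2}$ (with $w = -\Delta u + \epsilon^{-2} F'(u)$) appears with a favorable sign. First I would compute, via the infinite-dimensional It\^o formula applied to the strong solution \eqref{s_solu-1} and using the identity $J'(v) = -\Delta v + \epsilon^{-2}F'(v) = w$ together with the Neumann boundary condition \eqref{s_solu-2},
\begin{align*}
d J(u(t)) &= \bigl( w(t), du(t) \bigr) + \frac{1}{2} \bigl( J''(u(t)) \, d\langle u \rangle_t, \cdot \bigr) \\
&= -\|w(t)\|^2_{L^2(\D)} \, dt + \frac{\delta^2}{2}\bigl( w(t), B:D^2 u + b\cdot\nabla u \bigr)\, dt \\
&\quad + \delta\bigl( w(t), \nabla u(t)\cdot X \bigr)\, dW(t) + \frac{\delta^2}{2} \mathcal{Q}(u(t))\, dt,
\end{align*}
where $\mathcal{Q}(u)$ is the It\^o correction coming from the second derivative of $J$ acting on the noise coefficient $\nabla u\cdot X$, namely a quadratic expression of the form $\int_\D \bigl(|\nabla(\nabla u\cdot X)|^2 + \epsilon^{-2} F''(u)|\nabla u\cdot X|^2\bigr)\,dx$. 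The key algebraic point, which I expect to mirror the passage from the It\^o form \eqref{sac} back to the Stratonovich form \eqref{sac_s}, is that the drift term $\frac{\delta^2}{2}(w, B:D^2u + b\cdot\nabla u)$ and the It\^o correction $\frac{\delta^2}{2}\mathcal Q(u)$ combine — after integration by parts using that $X$ has compact support — into a single bilinear form in $\nabla u$ that is controlled by $\delta^2 \|X\|^2_{C^2(\bar\D)} J(u)$. So one obtains schematically $d J(u) \le -\|w\|^2_{L^2}\,dt + C\delta^2\|X\|_{C^2}^2 J(u)\,dt + \delta(w,\nabla u\cdot X)\,dW$.

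Next I would pass to the $p$-th power: applying It\^o's formula to $\phi(r) = r^p$ with $r = J(u(t)) \ge 0$ gives
\begin{align*}
d J(u(t))^p &= p J(u(t))^{p-1}\, dJ(u(t)) + \frac{p(p-1)}{2} J(u(t))^{p-2} \, d\langle J(u)\rangle_t \\
&\le -p J(u)^{p-1}\|w\|^2_{L^2}\, dt + C\delta^2\|X\|_{C^2}^2\, p J(u)^p\, dt \\
&\quad + \frac{p(p-1)}{2} J(u)^{p-2}\, \delta^2 (w,\nabla u\cdot X)^2\, dt + p J(u)^{p-1}\delta(w,\nabla u\cdot X)\, dW.
\end{align*}
The quadratic-variation term is the place where the factor $p^2$ in $C_1$ enters: one estimates $|(w,\nabla u\cdot X)| \le \|w\|_{L^2}\|\nabla u\cdot X\|_{L^2} \le \|w\|_{L^2}\|X\|_{C^0}\,(2J(u))^{1/2}$, so that $J(u)^{p-2}(w,\nabla u\cdot X)^2 \le C\|X\|_{C^0}^2 J(u)^{p-1}\|w\|_{L^2}^2$, and then a Young/absorption step $\frac{p(p-1)}{2}\delta^2 C\|X\|_{C^0}^2 J(u)^{p-1}\|w\|_{L^2}^2 \le \frac{1}{2}pJ(u)^{p-1}\|w\|_{L^2}^2 + C\delta^2 p^2\|X\|_{C^0}^2 J(u)^p\,\cdot(\text{l.o.t.})$ moves half of the dissipation term to the left and leaves a term bounded by $C\delta^2 p^2 \|X\|_{C^2}^2 J(u)^p$. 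Here one does need the condition that $\delta$ be not too large (as the paper announces) so the absorption is legitimate; alternatively one keeps the dissipation with a positive fraction.

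Finally I would integrate from $0$ to $t$, take expectations so that the martingale term $\int_0^t pJ(u)^{p-1}\delta(w,\nabla u\cdot X)\,dW$ vanishes — this requires a localization/stopping-time argument since a priori we only know the integrand is locally integrable, using the $p$-moment bounds \eqref{p-bound} (equivalently \eqref{A:Reg}) to justify passing to the limit by dominated convergence — to obtain
\begin{equation*}
\E[J(u(t))^p] + \frac{1}{2}\E\Bigl[\int_0^t pJ(u(s))^{p-1}\|w(s)\|^2_{L^2}\,ds\Bigr] \le J(u_0)^p + C\delta^2 p^2\|X\|_{C^2(\bar\D)}\int_0^t \E[J(u(s))^p]\,ds.
\end{equation*}
Dropping the (nonnegative) dissipation term on the left and applying Gr\"onwall's inequality yields $\sup_{t\in[0,T]}\E[J(u(t))^p] \le e^{C\delta^2 p^2\|X\|_{C^2(\bar\D)}T} J(u_0)^p = C_1$; feeding this bound back into the dissipation integral gives the full estimate \eqref{energy}. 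I expect the main obstacle to be the It\^o-to-Stratonovich cancellation in the first step: carefully integrating by parts the terms $\frac{\delta^2}{2}(w, B:D^2u + b\cdot\nabla u)$ and $\frac{\delta^2}{2}\int_\D |\nabla(\nabla u\cdot X)|^2$ to see that the top-order ($D^2u$-involving and $|\nabla^2 u|$-type) contributions cancel, leaving only lower-order terms controlled by $J(u)$ times $\|X\|^2_{C^2}$ — and secondly the rigorous justification of the vanishing of the stochastic integral via stopping times, which is routine but must invoke the a priori moment bounds.
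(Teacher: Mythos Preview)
Your overall strategy matches the paper's: apply It\^o to $J(u)^p$, combine the It\^o drift term $\frac{\delta^2}{2}(w, B{:}D^2u + b\cdot\nabla u)$ with the second-order correction $\frac{\delta^2}{2}\int_\D[|\nabla(\nabla u\cdot X)|^2 + \epsilon^{-2}F''(u)|\nabla u\cdot X|^2]$ via integration by parts, kill the martingale by taking expectation, and Gr\"onwall. The cancellation you anticipate is exactly what the paper does: the top-order pieces combine into $\int_\D \bigl(G:(\nabla u\otimes\nabla u) + \epsilon^{-2}g\,F(u)\bigr)\,dx$ with $G,g$ built from up to two derivatives of $X$, hence bounded by $C\|X\|_{C^2}^2 J(u)$.

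Where you diverge from the paper is the quadratic-variation term $\frac{p(p-1)}{2}\delta^2 J(u)^{p-2}(w,\nabla u\cdot X)^2$. Your Cauchy--Schwarz bound $(w,\nabla u\cdot X)^2 \le 2\|X\|_{C^0}^2\|w\|_{L^2}^2 J(u)$ leads to a term $C(p-1)\delta^2\|X\|_{C^0}^2 \cdot pJ(u)^{p-1}\|w\|_{L^2}^2$, and your ``Young/absorption step'' bounding this by $\frac12 pJ(u)^{p-1}\|w\|_{L^2}^2 + C\delta^2 p^2\|X\|^2 J(u)^p$ is not an application of Young's inequality --- there is no second factor to split against, and no algebraic manipulation turns $J^{p-1}\|w\|_{L^2}^2$ into $J^p$. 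The only way your term can be absorbed is directly, under the smallness condition $(p-1)\delta^2 C\|X\|_{C^0}^2 < 1$. You do acknowledge this, but the lemma as stated places \emph{no} constraint on $\delta$; the paper's Remark following the proof makes explicit that your route produces only the weaker alternative \eqref{energy2} with the restriction $\delta \le 1/(\sqrt{p-1}\,\|X\|_{C(\bar\D)})$.

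The paper avoids the restriction by the energy--stress identity
\[
w\,\nabla u = \Bigl(-\Delta u + \tfrac{1}{\epsilon^2}F'(u)\Bigr)\nabla u = -\nabla\!\cdot(\nabla u\otimes\nabla u) + \nabla\!\Bigl(\tfrac12|\nabla u|^2 + \tfrac{1}{\epsilon^2}F(u)\Bigr),
\]
so that, after integrating by parts against $X\in C_0^{4,\beta_0}(\D)$,
\[
(w,\nabla u\cdot X) = \int_\D (\nabla u\otimes\nabla u):\nabla X\,dx - \int_\D\Bigl(\tfrac12|\nabla u|^2 + \tfrac{1}{\epsilon^2}F(u)\Bigr)\mathrm{div}\,X\,dx,
\]
giving $|(w,\nabla u\cdot X)| \le C\|X\|_{C^1}J(u)$ \emph{without} any factor of $\|w\|_{L^2}$. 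Hence $J(u)^{p-2}(w,\nabla u\cdot X)^2 \le C\|X\|_{C^1}^2 J(u)^p$ outright, yielding $Cp(p-1)\delta^2\|X\|_{C^1}^2\, \E[J(u)^p]$ on the right-hand side; this is where the $p^2$ in $C_1$ actually comes from, with no smallness on $\delta$ and no loss of dissipation.
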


\begin{proof}
By direct calculations we have
\begin{align*}
D (J(u)^p) &= p J(u)^{p-1} \Bigl( -\Delta u + \frac{1}{\epsilon^2} F'(u) \Bigr), \\
D^2 (J(u)^p) &= p (p-1) J(u)^{p-2} \Bigl( -\Delta u 
+ \frac{1}{\epsilon^2} F'(u) \Bigr) \otimes \Bigl( -\Delta u + \frac{1}{\epsilon^2} F'(u) \Bigr) \\
  &\quad + p J(u)^{p-1}  \Bigl( -\Delta + \frac{1}{\epsilon^2} F''(u) \Bigr).
\end{align*}
Now applying It\^{o}'s formula to the functional $\Phi(u(\cdot)) :=J(u(\cdot))^p$ 
and integration by parts, we obtain 
\begin{align}
\label{e:1}
&J(u(t))^p = J(u_0)^p - \int_0^t p J(u(s))^{p-1} \| w(s) \|^2_{L^2(\D)} \, ds \\
&\hskip.05in 
+ \int_0^t p J(u(s))^{p-1} \Bigl( w(s), \frac{\delta^2}{2} \big( B:D^2 u(s) 
+ b \cdot \nabla u(s) \big) \Bigr) \, ds + M_t \notag \\
&\hskip.05in 
+ \frac{\delta^2}{2} \int_0^t p(p-1) J(u(s))^{p-2}\bigl( w(s),\nabla u(s)\cdot X\bigr)^2 \, ds \notag \\
&\hskip.05in
+ \frac{\delta^2}{2} \int_0^t p J(u(s))^{p-1} \int_{\D} \Bigl[ |\nabla(\nabla u(s) \cdot X)|^2 
+ \frac{1}{\epsilon^2} F''(u(s)) (\nabla u(s) \cdot X)^2 \Bigr] \, dx \, ds, \notag
\end{align}
where $M_t$ is the martingale given by
\begin{align}
\label{e:3}
M_t = \delta \int_0^t p J(u(s))^{p-1} \bigl(w(s), \nabla u(s) \cdot X \bigr) \, d W(s).
\end{align}

By integration by parts and a direct calculation, we have
\begin{align}
\label{e:3_2}
\frac{\delta^2}{2} &\int_{\D} \Bigl[ |\nabla(\nabla u(s) \cdot X)|^2 
+ \frac{1}{\epsilon^2} F''(u(s)) (\nabla u(s) \cdot X)^2 \Bigr] \, dx \\
&\qquad \qquad + \Bigl( w(s), \frac{\delta^2}{2} \big( B:D^2 u(s) 
+ b \cdot \nabla u(s) \big) \Bigr) \notag \\
& = \frac{\delta^2}{2} \int_0^t \int_{\D} \Bigl( G(x) : (\nabla u(s) \otimes \nabla u(s)) 
+ \frac{1}{\epsilon^2} g(x) F(u(s)) \Bigr) \, dx \, ds, \notag
\end{align}
where $G(\cdot): \D \longrightarrow \R^{d \times d}$ and $g(\cdot): \D \longrightarrow \R$ are defined by
\begin{align}
\label{e:4}
G_{ij} &= [\partial_k (X_k \partial_l X_l)] \delta_{ij} + \partial_i[X_k \partial_k X_j] 
- 2 \partial_k [X_k \partial_i X_j] + (\partial_k X_i) (\partial_k X_j), \\
g &= \partial_k [X_k \partial_l X_l].
\end{align}

Taking expectation on both sides of \eqref{e:1}, by \eqref{e:3} and the fact that $\E [M_t] = 0$, 
we get the following equation
\begin{align}
\label{e:5}
&\E \left[ J(u(t))^p \right] = J(u_0)^p - \E \left[ \int_0^t p J(u(s))^{p-1} 
\left\| w(s) \right\|^2_{L^2(\D)} \, ds \right] \\
&\hskip.8in 
+ \frac{\delta^2}{2} \E \Bigg[ \int_0^t p J(u(s))^{p-1} 
\int_{\D} \Big( G(x) : (\nabla u(s) \otimes \nabla u(s)) \notag \\
&\hskip1.2in 
+ \frac{1}{\epsilon^2} g(x) F(u(s)) \Big) \, dx \, ds \Bigg] \notag \\
&\hskip.8in 
+ \frac{\delta^2}{2} \E \left[ \int_0^t p(p-1) J(u(s))^{p-2} \left( w(s), 
\nabla u(s) \cdot X \right)^2 \, ds \right]. \notag
\end{align}

Now it remains to estimate the third and fourth terms on the right-hand side of \eqref{e:5}. 
On noting that
\begin{align*}
\|G\|_{C(\bar{\D})} + \|g\|_{C(\bar{\D})} \leq C \|X\|^2_{C^2(\bar{\D})},
\end{align*}
the third term can be bounded as follows
\begin{align}
\label{e:6}
\frac{\delta^2}{2} \E &\left[ \int_0^t p J(u(s))^{p-1} \int_{\D} 
\Bigl( G(x) : (\nabla u(s) \otimes \nabla u(s)) + \frac{1}{\epsilon^2} g(x) F(u(s))\Bigr)\, dx\, ds \right]\\
   &\leq C p \delta^2 \|X\|^2_{C^2(\bar{\D})} \int_0^t \E \left[ J(u(s))^p \right] \, ds. \notag
\end{align}
For the fourth term, by integration by parts and the fact that
\begin{align*}
\Bigl(-\Delta u + \frac{1}{\epsilon^2} F'(u) \Bigr) \nabla u 
= - \nabla \cdot (\nabla u \otimes \nabla u) + \nabla \Bigl( \frac{1}{2} |\nabla u|^2 
+ \frac{1}{\epsilon^2} F(u) \Bigr),
\end{align*}
we have
\begin{align}
\label{e:7}
\frac{\delta^2}{2} \E &\left[ \int_0^t p(p-1) J(u(s))^{p-2}\left(w(s),\nabla u(s)\cdot X\right)^2\,ds \right] \\
&\qquad
\leq C p (p-1) \delta^2 \|X\|^2_{C^1(\bar{\D})} \int_0^t \E \left[ J(u(s))^p \right] \, ds. \notag
\end{align}

Finally, combine \eqref{e:5}--\eqref{e:7} and apply the Gronwall's inequality, we obtain 
the desired estimate \eqref{energy}.
\end{proof}

\begin{remark}
In the case of $p>1$ in Lemma~\ref{lem:energy}, the fourth term on the right-hand 
side of \eqref{e:5} can also be bounded by
\begin{align}
\label{e:8}
\frac{\delta^2}{2}\E &\left[\int_0^t p(p-1) J(u(s))^{p-2}\left(w(s),\nabla u(s)\cdot X\right)^2\, ds \right] \\
&\leq (p-1) \delta^2 \|X\|^2_{C(\bar{\D})} \E \left[ \int_0^t p J(u(s))^{p-1} 
\left\| w(s) \right\|^2_{L^2(\D)} \, ds \right]. \notag
\end{align}
Therefore \eqref{energy} is replaced by
\begin{align}
\label{energy2}
\sup_{t \in [0,T]} \E \left[ J(u(t))^p \right] 
+ C_p \E \left[ \int_0^t p J(u(s))^{p-1} \left\| w(s) \right\|^2_{L^2(\D)} \, ds \right] \leq \tilde{C}_1,
\end{align}
where 
\[
C_p = 1 - (p-1) \delta^2 \|X\|^2_{C(\bar{\D})} \qquad\mbox{and}\qquad
\tilde{C}_1 = e^{C \delta^2 p \|X\|_{C^2(\bar{\D})} T} \left[ J(u_0) \right]^p.
\]
However, \eqref{energy2} implicitly imposes a restriction on $\delta$ with respect to $p$, 
namely,  $\delta \leq 1/(\sqrt{p-1} \|X\|_{C(\bar{\D})})$.
\end{remark}

\begin{remark}
Note that in the case of $p=1$ and $\delta = 0$, we recover the the following deterministic energy law:
\begin{align*}
\sup_{t \in [0,T]} J(u(t)) + \int_0^t \left\| w(s) \right\|_{L^2(\D)} \, ds \leq J(u_0).
\end{align*}
\end{remark}

Next we derive H\"{o}lder continuity in time for the solution $u$ with respect to the spatial $L^2$-norm 
and the $H^1$-seminorm.  Both results play a key role in our error analysis in Section~\ref{sec:fem}. 
In a sense they substitute the time derivative of the solution, which does not exist in the stochastic
case, appeared in the deterministic error analysis. We also note that, as expected, the constants in 
these estimates depend on $\epsilon^{-1}$ in some polynomial orders.

\begin{lemma}\label{lem:e2}
Let $u$ be the strong solution to problem \eqref{sac}, \eqref{dac-b}--\eqref{dac-i}. 
For any $s, t \in [0,T]$ with $s < t$ and $p \geq 2$, we have
\begin{align} \label{e2}
&\frac{p}{2}\, \E \left[ \int_{s}^t \| u(\zeta) - u(s) \|^{p-2}_{L^2(\D)} 
\|\nabla(u(\zeta)-u(s)) \|^2_{L^2(\D)}\, d\zeta \right] \\
&\qquad + \frac{\delta^2 p}{4}\, \E \left[ \int_{s}^t \| u(\zeta) - u(s) \|^{p-2}_{L^2(\D)} 
\|\nabla(u(\zeta)-u(s)) \cdot X \|^2_{L^2(\D)} \, d\zeta \right]  \notag \\
&\qquad +\E \Bigl[ \|u(t)-u(s) \|_{L^2(\D)}^p \Bigr]  \leq C_2 (t-s) \notag
\end{align}
for some $C_2 = C(p, \delta, \|X||_{C^2(\bar{\D})}, T) \epsilon^{-p/2} \bigl( [J(u_0)]^{p/2} + C \bigr)>0$.
\end{lemma}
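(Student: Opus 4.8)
The plan is to fix the earlier time $s$, introduce the increment process $v(\zeta):=u(\zeta)-u(s)$ for $\zeta\in[s,t]$, and apply It\^o's formula to the functional $\Phi(v)=\|v\|_{L^2(\D)}^p=(\|v\|_{L^2(\D)}^2)^{p/2}$; throughout the sketch $\|\cdot\|$ denotes the $L^2(\D)$-norm unless otherwise indicated. Since $v(s)=0$ and, by \eqref{var_solu}, $v$ solves on $[s,t]$ the same SPDE as $u$, with drift $A(\zeta)=\Delta u(\zeta)-\epsilon^{-2}f(u(\zeta))+\frac{\delta^2}{2}\bigl(B:D^2u(\zeta)+b\cdot\nabla u(\zeta)\bigr)$ and diffusion coefficient $\Sigma(\zeta)=\delta\nabla u(\zeta)\cdot X$, It\^o's formula together with $D\Phi(v)=p\|v\|^{p-2}v$ and $D^2\Phi(v)[h,h]=p\|v\|^{p-2}\|h\|^2+p(p-2)\|v\|^{p-4}(v,h)^2$ gives
\[
\begin{aligned}
\|v(t)\|^p &=\int_s^t p\|v\|^{p-2}(v,A)\,d\zeta+\frac{p}{2}\int_s^t\|v\|^{p-2}\|\Sigma\|^2\,d\zeta \\
&\quad +\frac{p(p-2)}{2}\int_s^t\|v\|^{p-4}(v,\Sigma)^2\,d\zeta+M_t,
\end{aligned}
\]
where $M_t=p\delta\int_s^t\|v\|^{p-2}(v,\nabla u\cdot X)\,dW$ is a martingale with $\E[M_t]=0$ (by the moment bounds \eqref{A:Reg}, after a routine localization if desired).

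The core is a term-by-term estimate, writing $\nabla u(\zeta)=\nabla v(\zeta)+\nabla u(s)$ everywhere. Integrating by parts in $(v,\Delta u)$, the Neumann condition \eqref{dac-b} kills the boundary term and leaves $-\|\nabla v\|^2-(\nabla v,\nabla u(s))$; I keep $-\tfrac{p}{2}\|v\|^{p-2}\|\nabla v\|^2$ to move to the left-hand side and absorb $-(\nabla v,\nabla u(s))$ by Young's inequality into part of the remaining gradient term, at the cost of $C\|v\|^{p-2}\|\nabla u(s)\|^2$. For the gradient-noise contributions --- the drift piece $\frac{\delta^2}{2}(v,B:D^2u+b\cdot\nabla u)$ together with the It\^o quadratic-variation correction $\frac{p}{2}\|v\|^{p-2}\|\Sigma\|^2$ --- I integrate by parts (using that $X$ is compactly supported in $\D$, so $B\nabla u$ vanishes near $\partial\D$) and use $B=X\otimes X$, rewriting everything in terms of $\|\nabla v\cdot X\|^2$, the cross term $(\nabla v\cdot X,\nabla u(s)\cdot X)$, and convection-type remainders; after a careful choice of the Young parameters, and using the standing assumption that $\delta$ is not too large, one retains $\frac{\delta^2 p}{4}\|v\|^{p-2}\|\nabla v\cdot X\|^2$ on the left, the leftover terms being controlled by $C\|X\|_{C^2(\bar{\D})}^2\|v\|^{p-2}\bigl(\|v\|^2+\|\nabla u(s)\cdot X\|^2\bigr)$. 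Here it is essential to integrate by parts once more in remainders such as $(v,(\operatorname{div}X)X\cdot\nabla v)$, using $v\,(X\cdot\nabla v)=\tfrac12 X\cdot\nabla(v^2)$, so that no uncontrolled $\|\nabla v\|^2$ is regenerated; the same device handles the $(v,\Sigma)^2$ term, since $(v,\nabla v\cdot X)=-\tfrac12(\operatorname{div}X,v^2)$. For the nonlinearity $-\epsilon^{-2}(v,f(u(\zeta)))$ I use the identity $f(u)^2=4u^2F(u)$, hence $\|f(u(\zeta))\|\le 2\epsilon\,\|u(\zeta)\|_{L^\infty(\D)}\sqrt{J(u(\zeta))}$, followed by Cauchy--Schwarz and Young's inequality; this is precisely where the factor $\epsilon^{-p/2}$ and the $[J(u_0)]^{p/2}$-dependence of $C_2$ enter, via the energy estimate of Lemma~\ref{lem:energy} (with exponent $p/2$) and the $L^\infty$-moment bounds \eqref{A:Reg} for the ``$+C$'' part.

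Collecting all terms, taking expectations so that $M_t$ drops out, and using $\sup_\zeta\E\|\nabla u(\zeta)\|^p\le 2^{p/2}\sup_\zeta\E\bigl[J(u(\zeta))^{p/2}\bigr]\le C\,[J(u_0)]^{p/2}$ together with Lemma~\ref{lem:energy} and \eqref{A:Reg} to bound the remaining time-independent quantities, one arrives at a Gronwall-type inequality
\[
\begin{aligned}
&\E\|v(t)\|^p+\frac{p}{2}\,\E\!\int_s^t\!\|v\|^{p-2}\|\nabla v\|^2\,d\zeta+\frac{\delta^2 p}{4}\,\E\!\int_s^t\!\|v\|^{p-2}\|\nabla v\cdot X\|^2\,d\zeta \\
&\qquad\le C\int_s^t\E\|v(\zeta)\|^p\,d\zeta+C_2(t-s)
\end{aligned}
\]
with $C_2$ of the asserted form. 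Dropping the two nonnegative coercive integrals and applying Gronwall's inequality in $t$ gives $\E\|v(t)\|^p\le C_2(t-s)e^{CT}$; feeding this back into the displayed inequality then bounds the coercive integrals by a constant times $(t-s)$ as well, which is \eqref{e2}. The step I expect to be the main obstacle is the bookkeeping of the gradient-noise terms --- arranging that the top-order $\|\nabla v\cdot X\|^2$-contributions of the drift correction and the It\^o correction combine with the correct sign and constant, and that the repeated integrations by parts leave only absorbable remainders --- with keeping the $\epsilon$-powers sharp in the treatment of the Allen--Cahn nonlinearity a close second.
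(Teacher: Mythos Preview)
Your overall scheme—apply It\^o's formula to $\|u(\cdot)-u(s)\|_{L^2}^p$, estimate term by term, and close with Gronwall—is exactly the paper's approach. Two of your term-by-term choices, however, do not deliver what you claim.

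\textbf{The gradient-noise terms.} You propose to treat the drift correction $\frac{\delta^2 p}{2}\|v\|^{p-2}\bigl(v,\,B{:}D^2u+b\cdot\nabla u\bigr)$ together with the It\^o correction $\frac{\delta^2 p}{2}\|v\|^{p-2}\|\nabla u\cdot X\|^2$. After one integration by parts the drift piece becomes $-\frac{\delta^2 p}{2}\|v\|^{p-2}\bigl(\nabla v\cdot X,\nabla u\cdot X\bigr)$ plus lower order. Writing $\nabla u=\nabla v+\nabla u(s)$ in both pieces and adding, the $\pm\frac{\delta^2 p}{2}\|v\|^{p-2}\|\nabla v\cdot X\|^2$ contributions cancel exactly (this is the Stratonovich structure), leaving only $\frac{\delta^2 p}{2}\|v\|^{p-2}\bigl[(\nabla v\cdot X,\nabla u(s)\cdot X)+\|\nabla u(s)\cdot X\|^2\bigr]$ plus remainders. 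There is then nothing to ``retain on the left,'' and no smallness of $\delta$ can manufacture a negative $\|\nabla v\cdot X\|^2$ term. The paper avoids this by treating the two pieces \emph{separately}: the drift correction alone, after Young on the cross term, yields $-\frac{\delta^2 p}{4}\|v\|^{p-2}\|\nabla v\cdot X\|^2$ (this is estimate \eqref{e2:5}); the It\^o correction (terms $IV+V$) is then bounded crudely by $\|\nabla u(\zeta)\cdot X\|^2\le\|X\|_{C(\bar\D)}^2\|\nabla u(\zeta)\|^2$ and Young, using the a~priori energy bound $\sup_\zeta\E\|\nabla u(\zeta)\|^p\le C[J(u_0)]^{p/2}$, with no reference to $\nabla v$ at all. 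No restriction on $\delta$ is needed.

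\textbf{The nonlinearity and the $\epsilon$-power.} Your route—Cauchy--Schwarz on $(v,f(u))$ followed by $\|f(u)\|\le 2\epsilon\|u\|_{L^\infty}\sqrt{J(u)}$—gives $p\|v\|^{p-2}\bigl(v,-\epsilon^{-2}f(u)\bigr)\le 2p\,\epsilon^{-1}\|v\|^{p-1}\|u\|_{L^\infty}\sqrt{J(u)}$, and Young with conjugate exponents $p/(p-1)$ and $p$ then produces $C\|v\|^p+C\epsilon^{-p}\|u\|_{L^\infty}^pJ(u)^{p/2}$. That is $\epsilon^{-p}$, not the $\epsilon^{-p/2}$ asserted in $C_2$. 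The paper instead applies Young \emph{pointwise} first, using $|vf(u)|=2|uv|\sqrt{F(u)}\le\frac{\epsilon}{2}|uv|^2+\frac{2}{\epsilon}F(u)$, so that after dividing by $\epsilon^2$ the two resulting terms carry factors $\epsilon^{-1}$ and $\epsilon^{-1}\cdot\epsilon^{-2}F(u)$; a second Young on $\|v\|^{p-2}$ against each (exponents $p/(p-2)$ and $p/2$) then gives exactly $\epsilon^{-p/2}$, with the remaining quantities controlled by $\|u\|_{L^4}^4\le 8\epsilon^2J(u)+C$ and $\epsilon^{-2}\|F(u)\|_{L^1}\le J(u)$.
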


\begin{proof}
Applying It\^{o}'s formula to the functional 
$\Phi(u(\cdot)) := \|u(\cdot)-u(s)\|_{L^2(\D)}^p$ with fixed $s \in [0,T)$ 
and using integration by parts, we obtain
\begin{align}
\label{e2:1}
&\|u(t)-u(s)\|_{L^2(\D)}^p \\
&\quad = p \int_{s}^t \| u(\zeta) - u(s) \|^{p-2}_{L^2(\D)} 
\Bigl( u(\zeta)-u(s), \Delta u(\zeta) - \frac{1}{\epsilon^2} f(u(\zeta)) \Bigr) \, d\zeta \notag \\
&\qquad  + \frac{\delta^2 p}{2} \int_{s}^t \| u(\zeta) - u(s) \|^{p-2}_{L^2(\D)} 
\Bigl( u(\zeta)-u(s), B:D^2u(\zeta) + b \cdot \nabla u(\zeta) \Bigr) \, d\zeta \notag \\
&\qquad + p \int_{s}^t  \| u(\zeta) - u(s) \|^{p-2}_{L^2(\D)} 
\Bigl( u(\zeta)-u(s), \delta \nabla u(\zeta) \cdot X \Bigr) \,d W(\zeta)  \notag \\
&\qquad + \frac{\delta^2 p}{2} \int_s^t (p-2) \| u(\zeta) - u(s) \|^{p-4}_{L^2(\D)} 
\bigl(u(\zeta) - u(s), \nabla u(\zeta) \cdot X \bigr)^2 \, d \zeta \notag \\
&\qquad + \frac{\delta^2 p}{2} \int_{s}^t \| u(\zeta) - u(s) \|^{p-2}_{L^2(\D)} 
\| \nabla u(\zeta) \cdot X\|^2_{L^2(\D)} \,d\zeta \notag\\
&\quad = I + II + III + IV + V. \notag
\end{align}

We only consider the case $p>2$ since the proof for the case $p=2$ is simpler. 
By Young's inequality, we have
\begin{align}
\label{e2:2}
&\E \left[ p \int_{s}^t \| u(\zeta) - u(s) \|^{p-2}_{L^2(\D)} 
\Bigl( u(\zeta)-u(s), \Delta u(\zeta) \Bigr) \, d\zeta \right] \\
&\quad = - \E \left[ p \int_{s}^t \| u(\zeta) - u(s) \|^{p-2}_{L^2(\D)} 
\| \nabla u(\zeta) - \nabla u(s) \|^2_{L^2(\D)} \, d\zeta \right] \notag \\
&\qquad - \E \left[ p \int_{s}^t \| u(\zeta) - u(s) \|^{p-2}_{L^2(\D)} 
\Bigl( \nabla u(\zeta) - \nabla u(s), \nabla u(s) \Bigr) \, d\zeta \right] \notag \\
&\quad \leq -\frac{p}{2} \E \left[ \int_{s}^t \| u(\zeta) - u(s) \|^{p-2}_{L^2(\D)} 
\| \nabla u(\zeta) - \nabla u(s) \|^2_{L^2(\D)} \, d\zeta \right] \notag \\
&\qquad + \frac{p}{2} \E \left[ \int_{s}^t \| u(\zeta) - u(s) \|^{p-2}_{L^2(\D)} 
\| \nabla u(s) \|^2_{L^2(\D)} \, d\zeta \right] \notag \\
&\quad \leq -\frac{p}{2} \E \left[ \int_{s}^t \| u(\zeta) - u(s) \|^{p-2}_{L^2(\D)} 
\| \nabla u(\zeta) - \nabla u(s) \|^2_{L^2(\D)} \, d\zeta \right]  \notag \\
&\qquad + \frac{p-2}{2} \int_s^t \E \left[ \|u(\zeta)-u(s)\|_{L^2(\D)}^p \right] \, d \zeta 
+ \E \left[ \| \nabla u(s) \|^{p}_{L^2(\D)} \right] (t-s). \notag
\end{align}
Similarly, we have
\begin{align}
\label{e2:3}
&\E \left[ p \int_{s}^t \| u(\zeta) - u(s) \|^{p-2}_{L^2(\D)} 
\Bigl( u(\zeta)-u(s), - \frac{1}{\epsilon^2} f(u(\zeta)) \Bigr) \, d\zeta \right] \\
&\leq \E \bigg[ p \int_{s}^t \| u(\zeta) - u(s) \|^{p-2}_{L^2(\D)} \notag \\
&\qquad \times \Bigl( \frac{1}{2 \epsilon} \| u(\zeta) (u(\zeta)-u(s)) \|^2_{L^2(\D)} 
+ \frac{2}{\epsilon^3} \|F(u(\zeta))\|_{L^1(\D)} \Bigr) \, d\zeta \bigg] \notag \\
&\leq 2 (p-2) \int_s^t \E \Bigl[ \| u(\zeta) - u(s) \|^p_{L^2(\D)} \Bigr] \, d \zeta \notag \\
&+ C_p \epsilon^{-p/2} \Bigl( \sup_{s \leq \zeta \leq t} \E \left[ \|u(\zeta)\|^{2p}_{L^4(\D)} \right] 
+ \sup_{s \leq \zeta \leq t} \E \Bigl[ \Bigl\|\frac{1}{\epsilon^2} F(u(\zeta)) \Bigr\|_{L^1(\D)} ^{p/2}\Bigr] \Bigr)(t-s). 
\notag
\end{align}

Using \eqref{e2:2}--\eqref{e2:3}, the first term on the right-hand side of \eqref{e2:1} can be 
bounded as follows
\begin{align}
\label{e2:4}
&\E [I] \leq -\frac{p}{2} \E \left[ \int_{s}^t \| u(\zeta) - u(s) \|^{p-2}_{L^2(\D)} 
\| \nabla u(\zeta) - \nabla u(s) \|^2_{L^2(\D)} \, d\zeta \right] \\
&\hskip.1in + \frac{5 (p-2)}{2} \int_s^t \E \left[ \| u(\zeta) - u(s) \|^p_{L^2(\D)} \right] \, d \zeta 
+ \E \left[ \| \nabla u(s) \|^{p}_{L^2(\D)} \right] (t-s) \notag \\
&\hskip.1in + C_p \epsilon^{-p/2} \Bigl( \sup_{s \leq \zeta \leq t} 
\E \left[ \|u(\zeta)\|^{2p}_{L^4(\D)} \right] 
+ \sup_{s \leq \zeta \leq t} \E \Bigl[ \Bigl\| \frac{1}{\epsilon^2} F(u(\zeta)) \Bigr\|_{L^1(\D)} ^{p/2} \Bigr] \Bigr) (t-s). \notag
\end{align}

Again, by using Young's inequality and integration by parts the second term on the 
right-hand side of \eqref{e2:1} can be bounded by  
\begin{align}
\label{e2:5}
\E[II] 
&\leq - \frac{\delta^2 p}{4} \E \left[ \int_s^t \| u(\zeta) - u(s) \|^{p-2}_{L^2(\D)} 
\| \nabla u(\zeta) \cdot X - \nabla u(s) \cdot X \|^2_{L^2(\D)} \, d \zeta \right] \\
&\quad + \frac{\delta^2}{2} (p-2) \int_s^t \E \left[ \| u(\zeta) - u(s) \|^p_{L^2(\D)} \right] \, d \zeta \notag \\
&\quad + \frac{\delta^2}{2} (\|X\|^p_{C(\bar{\D})} + \|X\|^{2p}_{C^1(\bar{\D})}) 
\sup_{s \leq \zeta \leq t} \E \left[ \| \nabla u(\zeta)\|^{p}_{L^2(\D)} \right] (t-s). \notag
\end{align}

Next, it follows from Young's inequality that  
\begin{align}
\label{e2:6}
\E \left[ IV + V \right] &\leq \E \left[ \frac{\delta^2}{2} p (p-1) 
\int_s^t \| u(\zeta) - u(s) \|^{p-2}_{L^2(\D)} \| \nabla u(\zeta) \cdot X \|^2_{L^2(\D)} \, d \zeta \right] \\
&\leq \frac{\delta^2}{2} (p-1)(p-2) \int_s^t \E \left[ \| u(\zeta) - u(s) \|^{p}_{L^2(\D)} \right] \, d \zeta \notag \\
&\quad + (p-1) \delta^2 \|X\|^p_{C(\bar{\D})} \sup_{s \leq \zeta \leq t} 
\E \left[ \| \nabla u(\zeta) \|^p_{L^2(\D)} \right] (t-s). \notag
\end{align}

Combining \eqref{e2:1}, \eqref{e2:4}--\eqref{e2:6} and using the facts that
 $\|u\|^4_{L^4(\D)} \leq 8 \epsilon^2 J(u) + C$ and $E[III] = 0$, we have
\begin{align} \label{e2:7}
&\E \left[ \|u(t)-u(s) \|_{L^2(\D)}^p \right] \\
&\qquad + \frac{p}{2} \E \left[ \int_{s}^t \| u(\zeta) - u(s) \|^{p-2}_{L^2(\D)} 
\|\nabla(u(\zeta)-u(s)) \|^2_{L^2(\D)}\, d\zeta \right] \notag \\
&\qquad + \frac{\delta^2 p}{4} \E \left[ \int_{s}^t \| u(\zeta) - u(s) \|^{p-2}_{L^2(\D)} 
\|\nabla(u(\zeta)-u(s)) \cdot X \|^2_{L^2(\D)} \, d\zeta \right] \notag \\
&\quad\leq \frac{p-2}{2} (p \delta^2 + 5) 
\int_s^t \E \left[ \| u(\zeta) - u(s) \|^p_{L^2(\D)} \right] \, d \zeta \notag \\
&\qquad + C(p, \delta^2, \|X\|_{C^1(\bar{\D})}) \epsilon^{-p/2} 
\Bigl( \sup_{s \leq \zeta \leq t} \E \left[ J(u(\zeta)^{p/2}) \right] + C \Bigr) (t-s). \notag
\end{align}

Finally, the desired estimate \eqref{e2} follows from \eqref{e2:7}, the Gronwall's inequality 
and Lemma~\ref{lem:energy}. The proof is complete.
\end{proof}

\begin{lemma}\label{lem:e3}
Let $u$ be the strong solution to problem \eqref{sac}, \eqref{dac-b}--\eqref{dac-i}. 
For any $s,t \in [0,T]$ with $s < t$, we have
\begin{align}
\label{e3}
\E \big[ \|\nabla (u(t)-u(s)) \|_{L^2(\D)}^2 \big] + \frac{1}{2} \E\left[ \int_{s}^t \|\Delta(u(\zeta)-u(s))\|_{L^2(\D)}^2 \, d\zeta \right] \leq C_3 (t-s),
\end{align}
where
\begin{align*}
C_3 &= C \Bigl(\delta^2 \|X\|^2_{C^1(\bar{\D})} + \delta^4 \|X\|^4_{C^1(\bar{\D})} 
+ \frac{1}{\epsilon^2} + 1 \Bigr) \Bigl( C_1 + \sup_{s \leq \zeta \leq t} 
\E \left[ \|\Delta u(\zeta)\|^2_{L^2(\D)} \right] \Bigr).
\end{align*}
\end{lemma}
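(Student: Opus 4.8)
The plan is to mimic the proof of Lemma~\ref{lem:e2}, but now apply It\^o's formula to the functional $\Phi(u(\cdot)) := \|\nabla(u(\cdot)-u(s))\|_{L^2(\D)}^2$ with $s$ fixed. Since $u$ is only an $H^1(\D)$-semimartingale a priori, I would first integrate by parts so that the drift and diffusion coefficients are tested against $-\Delta(u(\zeta)-u(s))$ rather than differentiated; concretely, $(\nabla(u(\zeta)-u(s)), \nabla v) = -(\Delta(u(\zeta)-u(s)), v)$ whenever $u(\zeta)-u(s) \in H^2(\D)$ with zero Neumann data, which holds by the regularity assumption. Using the It\^o SPDE \eqref{sac} in the divergence form given there, the expansion produces: a dominating term $-\E\int_s^t \|\Delta(u(\zeta)-u(s))\|^2_{L^2(\D)}\,d\zeta$ coming from the pairing of $\Delta u(\zeta)$ against $-\Delta(u(\zeta)-u(s))$ (after splitting $\Delta u(\zeta) = \Delta(u(\zeta)-u(s)) + \Delta u(s)$); a nonlinear term $\frac{1}{\epsilon^2}(f(u(\zeta)), \Delta(u(\zeta)-u(s)))$; the noise-induced drift $\frac{\delta^2}{2}(\operatorname{div}(B\nabla u(\zeta)) + (b-\operatorname{div}B)\cdot\nabla u(\zeta), \Delta(u(\zeta)-u(s)))$; a martingale term with zero expectation; and the It\^o correction $\delta^2 \|\nabla(\nabla u(\zeta)\cdot X)\|^2_{L^2(\D)}$ (or rather its $H^1$-analogue, $\|\nabla(\nabla u(\zeta)\cdot X)\|^2$ paired appropriately — I would be careful here, since the quadratic variation of $\nabla u \cdot X\,dW$ contributes $\delta^2\|\nabla(\nabla u(\zeta)\cdot X)\|^2_{L^2(\D)}$ to the evolution of the $H^1$-seminorm squared).

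The key steps, in order: (1) write out the It\^o expansion after integration by parts; (2) for the Laplacian term, split $\Delta u(\zeta) = \Delta(u(\zeta)-u(s)) + \Delta u(s)$, keep $-\E\int_s^t\|\Delta(u(\zeta)-u(s))\|^2$ on the left, and bound the cross term $-\E\int_s^t(\Delta(u(\zeta)-u(s)),\Delta u(s))$ by Young's inequality, absorbing half of the dominating term and leaving $C\sup_\zeta\E[\|\Delta u(\zeta)\|^2_{L^2(\D)}](t-s)$; (3) bound the nonlinear term using $|f(u)| = |u^3-u| \le C(|u|^3+1)$, Young's inequality to absorb another fraction of $\|\Delta(u(\zeta)-u(s))\|^2$, and then control $\E[\|u(\zeta)\|^6_{L^6(\D)}]$ and similar quantities by $C_1$ via Lemma~\ref{lem:energy} and Sobolev embedding (recalling $J(u)$ controls $\|\nabla u\|^2_{L^2}$ and $\frac{1}{\epsilon^2}\|u\|^4_{L^4}$, and in $d\le 3$ higher $L^q$ norms follow from the $W^{s,\infty}$ bound \eqref{A:Reg} — or one can stay with the $H^1$-bound and interpolation); (4) bound the noise drift term: since $B = X\otimes X$ and $b$ are smooth with $\|X\|_{C^1(\bar\D)}$-controlled coefficients, $\|\operatorname{div}(B\nabla u) + (b-\operatorname{div}B)\cdot\nabla u\|_{L^2(\D)} \le C\|X\|^2_{C^2(\bar\D)}\|u\|_{H^2(\D)} \le C\|X\|^2_{C^2}(\|\Delta u\| + \text{l.o.t.})$, so Young's inequality absorbs yet another fraction of the dominating term at the cost of $C\delta^4\|X\|^4_{C^1}\sup_\zeta\E[\|\Delta u(\zeta)\|^2](t-s)$; (5) bound the It\^o correction $\delta^2\E\int_s^t\|\nabla(\nabla u(\zeta)\cdot X)\|^2_{L^2(\D)}\,d\zeta \le C\delta^2\|X\|^2_{C^1}\sup_\zeta\E[\|u(\zeta)\|^2_{H^2}](t-s) \le C\delta^2\|X\|^2_{C^1}(C_1 + \sup_\zeta\E[\|\Delta u(\zeta)\|^2])(t-s)$; (6) collect all absorbed fractions of $-\E\int_s^t\|\Delta(u(\zeta)-u(s))\|^2$ so that at least $\frac12$ of it remains on the left, and read off \eqref{e3} with the stated $C_3$. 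Unlike Lemma~\ref{lem:e2}, there is no $\int_s^t\E[\|\nabla(u(\zeta)-u(s))\|^2]$ term on the right that would force a Gronwall argument — the left-hand quantity here is $\E[\|\nabla(u(t)-u(s))\|^2]$ itself, which does not reappear on the right — so the estimate follows directly without invoking Gronwall, though one should double-check this by tracking whether any term genuinely reproduces $\|\nabla(u(\zeta)-u(s))\|^2$ (if it does, a short Gronwall step finishes it).

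The main obstacle I anticipate is the rigorous justification of applying It\^o's formula to the $H^1$-seminorm functional on a process that is, by the cited regularity, a $C^{3,\beta}(\bar\D)$-semimartingale but whose It\^o decomposition \eqref{s_solu-1} is stated pointwise in $x$ rather than as an $H^2(\D)$-valued equation — one needs the solution to be an $H^2(\D)$-valued (indeed $W^{s,\infty}$-valued, via \eqref{A:Reg}) semimartingale with the drift lying in $L^2(\D)$, so that the integration by parts $(\nabla\,\cdot,\nabla\,\cdot) = -(\Delta\,\cdot,\cdot)$ and the infinite-dimensional It\^o formula (as in Da Prato--Zabczyk \cite{DPZ1992}) are legitimate. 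Given the standing regularity assumptions this is routine but should be stated. A secondary, more computational nuisance is correctly identifying the It\^o correction term for the $H^1$-seminorm: the second Fr\'echet derivative of $v\mapsto\|\nabla(v-u(s))\|^2_{L^2(\D)}$ is the constant bilinear form $2(\nabla\cdot,\nabla\cdot)$, so the correction is $\delta^2\int_s^t\|\nabla(\nabla u(\zeta)\cdot X)\|^2_{L^2(\D)}\,d\zeta$ — I would take care that this is the term producing the $\delta^2\|X\|^2_{C^1}C_1$ contribution in $C_3$, and that the $\frac{1}{\epsilon^2}$ in $C_3$ comes solely from the nonlinear term in step (3) while the $+1$ absorbs all remaining constant-order contributions.
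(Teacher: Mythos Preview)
Your proposal is correct and follows essentially the same route as the paper: apply It\^o's formula to $\|\nabla(u(\cdot)-u(s))\|_{L^2(\D)}^2$, integrate by parts so that each drift term is paired with $-(\Delta(u(\zeta)-u(s)),\cdot)$, split $\Delta u(\zeta)=\Delta(u(\zeta)-u(s))+\Delta u(s)$ to extract the dominating negative term, bound the nonlinear, noise-drift, and It\^o-correction pieces by Young's inequality and Sobolev embedding $H^1\hookrightarrow L^6$, and finish without Gronwall. One small deviation worth flagging: in your step~(3) you propose absorbing a fraction of $\|\Delta(u(\zeta)-u(s))\|^2$ from the nonlinear term, which via Young would produce a factor $\epsilon^{-4}\|f(u)\|_{L^2}^2$; the paper instead bounds $\|\Delta(u(\zeta)-u(s))\|_{L^2}^2\le C\sup_\zeta\|\Delta u(\zeta)\|_{L^2}^2$ directly there, which is what keeps the dependence at $\epsilon^{-2}$ in the stated $C_3$.
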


\begin{proof}
Applying It\^{o}'s formula to the functional $\Phi(u(\cdot)) :=\|\nabla u(\cdot) 
- \nabla u(s)\|_{L^2(\D)}^2$ with fixed $s \in [0,T)$ and using integration by parts, we get
\begin{align}
\label{e3:1}
&\|\nabla u(t)-\nabla u(s)\|_{L^2(\D)}^2 
= -2 \int_{s}^t (\Delta u(\zeta)-\Delta u(s),\Delta u(\zeta) ) \, d\zeta \\
&\quad 
+ 2\int_s^t \Bigl( \Delta u(\zeta) - \Delta u(s), \frac{1}{\epsilon^2} f(u(\zeta)) \Bigr) \, d\zeta \notag \\
&\quad 
- \delta^2 \int_s^t \Bigl( \Delta u(\zeta) - \Delta u(s), B: D^2 u(\zeta) 
+ b \cdot \nabla u(\zeta) \Bigr) \, d\zeta \notag \\
&\quad
-2\delta\int_{s}^t ( \Delta u(\zeta) - \Delta u(s), \nabla u(\zeta)\cdot X ) \, d W(\zeta)
\notag\\
&\quad
+\delta^2 \int_{s}^t \int_{\D} |\nabla(\nabla u(\zeta) \cdot X)|^2 \,dx \, d \zeta. \notag
\end{align}

The first term on the right-hand side of \eqref{e3:1} can be estimated by Schwarz inequality 
as follows
\begin{align} \label{e3:2}
&-2 \E \left[ \int_{s}^t (\Delta u(\zeta)-\Delta u(s),\Delta u(\zeta) ) \, d \zeta \right] \\
&= -2 \E \left[ \int_s^t \|\Delta u(\zeta)-\Delta u(s) \|^2_{L^2(\D)} \, d\zeta 
+ \int_{s}^t (\Delta u(\zeta)-\Delta u(s),\Delta u(s) ) \, d \zeta \right] \notag \\
&\leq - \E \left[ \int_s^t \|\Delta u(\zeta)-\Delta u(s)\|^2_{L^2(\D)} \, d\zeta \right] 
+ \|\Delta u(s)\|^2_{L^2(\D)} (t-s). \notag
\end{align}
For the second term on the right-hand side of \eqref{e3:1}, by Sobolev embedding $H^1(\D) \hookrightarrow L^6(\D)$ for $d \leq 3$, we have
\begin{align} \label{e3:3}
2\E &\left[ \int_s^t \Bigl( \Delta u(\zeta)-\Delta u(s), \frac{1}{\epsilon^2} f(u(\zeta)) \Bigr) \, d\zeta \right] \\
&\quad 
\leq \frac{2}{\epsilon^2} \E \left[ \int_s^t \left( \| \Delta u(\zeta)-\Delta u(s) \|_{L^2(\D)}^2 
+ \| f(u(\zeta)) \|_{L^2(\D)}^2 \right) \, d \zeta \right] \notag \\
&\quad 
\leq \frac{C}{\epsilon^2} \Bigl( \sup_{s \leq \zeta \leq t} 
\E \left[ \|\Delta u(\zeta)\|_{L^2(\D)}^2 \right] 
+ \sup_{s \leq \zeta \leq t} \E \left[ J(u(\zeta))^3 \right] + C \Bigr) (t-s) \notag.
\end{align}

Next we bound the third and fifth terms on the right-hand side of \eqref{e3:1} as follows
\begin{align} \label{e3:4}
&\E \left[ - \delta^2 \int_s^t \Big( \Delta u(\zeta) - \Delta u(s), B: D^2 u(\zeta) 
+ b \cdot \nabla u(\zeta) \Big) \, d\zeta \right] \\
&\quad 
\leq \frac{1}{2} \E \left[ \int_s^t \|\Delta u(\zeta) - \Delta u(s)\|^2_{L^2(\D)}\,d \zeta \right] 
+ \frac{1}{2} \delta^4 \|X\|^4_{C^1(\bar{\D})} \notag \\
&\qquad 
\times \Bigl(\sup_{s \leq \zeta \leq t} \E \left[ \|\Delta u(\zeta)\|^2_{L^2(\D)} \right] 
+ \sup_{s \leq \zeta \leq t} \E \left[ \| \nabla u(\zeta) \|^2_{L^2(\D)} \right] \Bigr) (t-s), \notag
\end{align}
and
\begin{align}
\label{e3:5}
&\E \left[ \delta^2 \int_s^t \int_{\D} |\nabla(\nabla u(\zeta) \cdot X)|^2 \, dx \, d\zeta \right] 
\leq C \delta^2 \|X\|^2_{C^1(\bar{\D})} \\
&\qquad 
\times \Bigl( \sup_{s \leq \zeta \leq t} \E \left[ \|\Delta u(\zeta)\|^2_{L^2(\D)} \right] 
+\sup_{s \leq \zeta \leq t} \E \left[ \| \nabla u(\zeta)\|^2_{L^2(\D)} \right] \Bigr) (t-s). \notag
\end{align}
The desired estimate \eqref{e3} then follows from \eqref{e3:1}--\eqref{e3:5}, 
Lemma~\ref{lem:energy} and the fact that the expectation of the fourth term on the 
right-hand side of \eqref{e3:2} is $0$.
\end{proof}

Finally, we include a H\"{o}lder continuity estimate for the nonlinear increment $f(u(t))-f(u(s)$ 
in the $L^2$-norm which will be useful to control the nonlinear term in the error analysis.

\begin{lemma}\label{lem:e4}
Let $u$ be the strong solution to problem \eqref{sac}, \eqref{dac-b}--\eqref{dac-i}.
For any $s,t \in [0,T]$ with $s < t$, we have
\begin{align} \label{e4}
\E \big[ \|f(u(t))-f(u(s))\|_{L^2(\D)}^2 \big]\leq C_4 (t-s),
\end{align}
where $C_4 = C(\delta, \eps, \|X\|_{C^1(\bar{\D})}, C_0, C_1)$.
\end{lemma}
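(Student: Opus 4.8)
The plan is to mimic the Itô-calculus arguments of Lemmas~\ref{lem:e2} and \ref{lem:e3}, applied this time to the functional $\Phi(v):=\|f(v)-f(u(s))\|_{L^2(\D)}^2$ with $s$ fixed, and then to close a Gronwall inequality in the variable $t$. Let me first note why the ``obvious'' estimate falls short. Pointwise one has $f(u(t))-f(u(s))=\bigl(u(t)^2+u(t)u(s)+u(s)^2-1\bigr)\,(u(t)-u(s))$, hence $\|f(u(t))-f(u(s))\|_{L^2(\D)}^2\le R\,\|u(t)-u(s)\|_{L^2(\D)}^2$ with $R:=\|u(t)^2+u(t)u(s)+u(s)^2-1\|_{L^\infty(\D)}^2$, which by \eqref{p-bound} (equivalently \eqref{A:Reg}) has finite moments of all orders uniformly in $s,t$. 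But Lemma~\ref{lem:e2} only delivers $\E[\|u(t)-u(s)\|_{L^2(\D)}^p]\le C_2(t-s)$, which is linear in $(t-s)$ for \emph{every} $p$ rather than of order $(t-s)^{p/2}$; hence any Cauchy--Schwarz or Hölder splitting of $\E[R\,\|u(t)-u(s)\|_{L^2(\D)}^2]$ over $\O$ only produces the suboptimal rate $(t-s)^{1/2}$. Recovering the full linear rate therefore forces one to estimate the left-hand side of \eqref{e4} directly from the SPDE.

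For the direct argument, view $\Phi(v)=\|f(v)-f(u(s))\|_{L^2(\D)}^2$ as a functional on $L^2(\D)$ with $D\Phi(v)=2\bigl(f(v)-f(u(s))\bigr)f'(v)$ and $D^2\Phi(v)=2\bigl[(f'(v))^2+\bigl(f(v)-f(u(s))\bigr)f''(v)\bigr]$. Applying It\^o's formula to $\zeta\mapsto\Phi(u(\zeta))$ on $[s,t]$ using \eqref{s_solu-1}, and taking expectations (the stochastic integral has mean zero and $\Phi(u(s))=0$), I obtain an identity expressing $\E[\Phi(u(t))]$ as the sum of a drift integral $\E\int_s^t 2\bigl(f(u(\zeta))-f(u(s)),\,f'(u(\zeta))\,\mathcal{L}u(\zeta)\bigr)\,d\zeta$, where $\mathcal{L}u=\Delta u-\tfrac1{\epsilon^2}f(u)+\tfrac{\delta^2}{2}(B:D^2u+b\cdot\nabla u)$ is the full drift of the equation, and an It\^o correction $\delta^2\,\E\int_s^t\int_{\D}\bigl[(f'(u(\zeta)))^2+(f(u(\zeta))-f(u(s)))f''(u(\zeta))\bigr](\nabla u(\zeta)\cdot X)^2\,dx\,d\zeta$. (One may integrate the $\Delta u$ contribution by parts, the boundary term vanishing since $\partial u/\partial n=0$, but as $u(\zeta)\in W^{3,\infty}(\D)$ this is optional.)

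Now I estimate each piece using that $u(\zeta)\in W^{3,\infty}(\D)$ with all moments controlled by $C_0$ via \eqref{p-bound}/\eqref{A:Reg}, supplemented where convenient by $\|u\|_{L^4(\D)}^4\le C\epsilon^2 J(u)+C$ together with Lemma~\ref{lem:energy} (which is what brings in the $C_1$-dependence). Every term of the drift integral carries exactly one factor $f(u(\zeta))-f(u(s))$ in the $L^2(\D)$-pairing, while the companion factor --- $f'(u)\Delta u$, $f'(u)f(u)$, or $f'(u)(B:D^2u+b\cdot\nabla u)$ --- lies in $L^2(\D)$ with uniformly bounded moments (the second carrying the $\epsilon^{-2}$, the third the $\|X\|_{C^1(\bar{\D})}^2$). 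By Cauchy--Schwarz in space followed by Young's inequality in $\O$, each such term is bounded by $\tfrac{C}{2}\E[\|f(u(\zeta))-f(u(s))\|_{L^2(\D)}^2]+C$ with $C=C(\delta,\epsilon,\|X\|_{C^1(\bar{\D})},C_0,C_1)$. In the correction term, the $(f'(u(\zeta)))^2(\nabla u(\zeta)\cdot X)^2$ part contains no increment and is dominated by a bounded-moment random variable, contributing only $C(t-s)$ after the time integration; the remaining $(f(u(\zeta))-f(u(s)))f''(u(\zeta))(\nabla u(\zeta)\cdot X)^2$ part again carries one increment factor and is absorbed by Young's inequality. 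Writing $\psi(\zeta):=\E[\|f(u(\zeta))-f(u(s))\|_{L^2(\D)}^2]$, these bounds yield $\psi(t)\le C\int_s^t\psi(\zeta)\,d\zeta+C(t-s)$.

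Finally, Gronwall's inequality gives $\psi(t)\le C(t-s)\,e^{C(t-s)}\le C\,e^{CT}(t-s)$, which is \eqref{e4} with $C_4=C(\delta,\epsilon,\|X\|_{C^1(\bar{\D})},C_0,C_1)$. \textbf{The main obstacle} is precisely the one flagged above: a soft moment argument loses half a power of $(t-s)$, so one is committed to the It\^o computation, whose delicate point is to organize the terms so that the increment $f(u(\zeta))-f(u(s))$ never appears to a power beyond the first (so Young's inequality against $\psi(\zeta)$ closes the loop) and to check, via the $W^{s,\infty}$-regularity \eqref{A:Reg} and the energy bound $C_1$ of Lemma~\ref{lem:energy}, that every coefficient random variable has finite moments uniformly in time. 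Justifying It\^o's formula for this non-polynomial functional on $L^2(\D)$ is a secondary technical point, handled exactly as in Lemmas~\ref{lem:e2} and \ref{lem:e3} via the $C^{3,\beta}(\bar{\D})$-regularity of $u$.
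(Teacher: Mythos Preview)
Your proposal is correct and follows essentially the same route as the paper: apply It\^o's formula to $\Phi(v)=\|f(v)-f(u(s))\|_{L^2(\D)}^2$, take expectations, and bound each drift and correction term using the regularity \eqref{A:Reg} together with Lemma~\ref{lem:energy}. The one minor difference is that the paper dispenses with the Gronwall closure --- instead of splitting off $f(u(\zeta))-f(u(s))$ via Young's inequality to form the loop term $\psi(\zeta)$, it bounds that factor crudely by polynomials in $\|u(\zeta)\|_{L^\infty}$ and $\|u(s)\|_{L^\infty}$, so every integrand has uniformly bounded moments and the time integral over $[s,t]$ delivers the factor $(t-s)$ directly.
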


\begin{proof}
Applying It\^{o}'s formula to the functional $\Phi(u(\cdot)) 
:= \|f(u(\cdot))-f(u(s))\|_{L^2(\D)}^2$ with fixed $s \in [0,T)$, we obtain
\begin{align}\label{e4:1}
&\|f(u(t))-f(u(s))\|_{L^2(\D)}^2 = 2\int_s^t \int_{\D} \big( f(u(\zeta))-f(u(s))\big)f'(u(\zeta)) \\
&\qquad 
\times \Bigl[ \Delta u(\zeta)-\frac{1}{\eps^2}f(u(\zeta))
+ \frac{\delta^2}{2} \div(B \nabla u(\zeta))+\frac{\delta^2}{2}(b-\div B) 
\cdot \nabla u(\zeta) \Bigr] \, dx \, d\zeta \notag\\
&\quad 
+ 2 \delta \int_s^t \int_{\D} \bigl( f(u(\zeta))-f(u(s)) \bigr) f'(u(\zeta)) \nabla u(\zeta) 
\cdot X \, dx \, d W(\zeta) \notag\\
&\quad 
+ \delta^2 \int_s^t \int_{\D} \bigl( f(u(\zeta))-f(u(s)) \bigr) f''(u(\zeta)) 
|\nabla u(\zeta) \cdot X|^2 \, dx \, d\zeta \notag\\
&\quad 
+ \delta^2 \int_s^t \int_{\D}[f'(u(\zeta))]^2|\nabla u(\zeta) \cdot X|^2 \, dx \, d\zeta. \notag
\end{align}

Taking the expectation on both sides, it follows from integration by parts, Young's inequality 
and Sobolev embedding $H^1(\D) \hookrightarrow L^6(\D)$ for $d \leq 3$ that
\begin{align}\label{e4:2}
& \E \left[ \|f(u(t))-f(u(s))\|_{L^2(\D)}^2 \right] 
\leq C (1 + \delta^2 \|X\|_{C^1(\D)}^2) (t-s) \\
&\qquad 
\times \Bigl( C + (1+\epsilon^{-2}) \sup_{s \leq \zeta \leq t} 
\E \left[ \|u(\zeta)\|_{L^{\infty}(\D)}^6 \right] 
+ \sup_{s \leq \zeta \leq t} \E \left[ J(u(\zeta))^3 \right] \Bigr). \notag
\end{align}

Finally, the desired estimate \eqref{e4} follows from \eqref{A:Reg}, \eqref{e4:2} and Lemma~\ref{lem:energy}.
\end{proof}

\section{Finite element methods}\label{sec:fem}
In this section we propose two fully discrete finite element methods for problem 
\eqref{sac}, \eqref{dac-b}--\eqref{dac-i} and establish their strong convergence
with optimal rates.

Let $t_n = n \tau$ ($n = 0, 1, \dots, N$) be a uniform partition of $[0,T]$ with $\tau = T/N$ and $
\mathcal{T}_h$ be a quasi-uniform triangulation of $\D$. We consider the finite element space
\begin{align*}
V^r_h := \{ v_h \in H^1(\D): v_h|_K \in \mathcal{P}_r(K) \quad \forall \, K \in \mathcal{T}_h \},
\end{align*}
where $\mathcal{P}_r(K)$ denotes the space of all polynomials of degree not exceeding a 
given positive integer $r$ on $K \in \mathcal{T}_h$. Our fully discrete finite element 
methods for problem \eqref{sac}, \eqref{dac-b}--\eqref{dac-i} are defined by seeking 
an $\F_{t_n}$-adapted $V^r_h$-valued process $\{ u_h^n \}$ ($n = 0, 1, \dots, N$) such 
that $\P$-almost surely
\begin{align}
\label{dfem}
&(u^{n+1}_h, v_h) + \tau \Bigl( \big(I + \frac{\delta^2}{2} B\big) \nabla u^{n+1}_h, \nabla v_h \Bigr) + \tau \frac{1}{\epsilon^2} (f^{n+1}, v_h) \\
&\qquad 
= (u^n_h, v_h) - \tau\frac{\delta^2}{2} \big( (\mbox{div} B - b) \cdot \nabla u^{n}_h, v_h \big)
+ \delta (\nabla u^n_h \cdot X, v_h) \, \Delta W_{n+1} \notag
\end{align}
for all $v_h \in V^r_h$, where $\Delta W_{n+1} := W(t_{n+1}) - W(t_n) \sim \mathcal{N} (0,\tau)$ and
\begin{align}
\label{fn}
f^{n+1} := (u^{n+1}_h)^3 - u^{n+1}_h \qquad \text{or} \qquad f^{n+1} := (u^{n+1}_h)^3 - u^{n}_h.
\end{align}
We remark that in the deterministic case (i.e., $\delta = 0$), \eqref{fn} corresponds 
to a fully implicit scheme or a convex-splitting scheme. We choose $u_h(0)  = P_h u_0$ 
to complement \eqref{dfem} where $P_h: L^2(\D) \longrightarrow V^r_h$ is the 
$L^2$-projection operator defined by
\begin{align*}
\bigl(P_h w, v_h\bigr) = (w, v_h) \qquad v_h \in V^r_h.
\end{align*}
The following error estimate results are well-known \cite{C1978, BS2008}:
\begin{align}
\label{Ph1}
&\|w - P_h w \|_{L^2(\D)} + h \| \nabla (w - P_h w) \|_{L^2(\D)} 
\leq C h^{\min{(r+1,s)}} \|w\|_{H^s(\D)},\\ 
\label{Ph2}
&\|w - P_h w\|_{L^\infty(\D)} \leq C h^{2-\frac{d}{2}} \|w\|_{H^2(\D)}  
\end{align}
for all $w \in H^s(\D)$.

We consider a convex decomposition $F(v) = F^+(v) - F^-(v)$ where
\begin{align}
\label{F+-}
F^+(v) = \frac{1}{4} (v^4 + 1) \qquad \text{and} \qquad F^-(v) = \frac{1}{2} v^2,
\end{align}
and define functionals
\begin{align}
\label{GH}
&I(v) := \frac{1}{2} (v,v) + \frac{\tau}{2} (\nabla v, \nabla v) 
+ \frac{\tau \delta^2}{2} (\nabla v \cdot X, \nabla v \cdot X) + \frac{\tau}{\epsilon^2} (F(v),1) \\
&\qquad - (u^n_h,v) + \frac{\tau \delta^2}{2} \big( (\text{div} B - b) 
\cdot \nabla u^n_h, v \big) - \delta \big((\nabla u^n_h \cdot X) \Delta W_{n+1}, v \big), \notag \\
&H(v) := \frac{1}{2} (v,v) + \frac{\tau}{2} (\nabla v, \nabla v) 
+ \frac{\tau \delta^2}{2} (\nabla v \cdot X, \nabla v \cdot X) + \frac{\tau}{\epsilon^2} (F^+(v),1) \\
&\qquad - \Bigl( 1 + \frac{\tau}{\epsilon^2} \Bigr) (u^n_h,v) 
+ \frac{\tau \delta^2}{2} \big( (\text{div} B - b) \cdot \nabla u^n_h, v \big) - \delta \big((\nabla u^n_h \cdot X) \Delta W_{n+1}, v\big). \notag
\end{align}
It is clear that $H(v)$ is strictly convex for all $h, \tau >0$ and $I(v)$ is strictly 
convex when $\tau \leq \epsilon^2$. Then a straightforward calculation shows that the discrete 
problem \eqref{dfem} is equivalent to the following finite-dimensional convex minimization problems:
\begin{align}
\label{min}
u^{n+1}_h &= \argmin_{v_h \in V^r_h} I(v_h), \qquad \text{when} 
\quad f^{n+1} = (u^{n+1}_h)^3 - u^{n+1}_h \quad \text{and} \quad \tau \leq \epsilon^2 \\
u^{n+1}_h &= \argmin_{v_h \in V^r_h} H(v_h), \qquad \text{when} 
\quad f^{n+1} = (u^{n+1}_h)^3 - u^{n}_h.
\end{align}
Therefore, the existence and uniqueness of problem \eqref{dfem} is guaranteed for all 
$h, \tau > 0$ when $f^{n+1} = (u^{n+1}_h)^3 - u^{n}_h$ and for all $\tau \leq \epsilon^2$ 
when $f^{n+1} = (u^{n+1}_h)^3 - u^{n+1}_h$.

\begin{remark}
We can also consider a modified scheme
\begin{align}
\label{dfem2}
&(u^{n+1}_h, v_h)  + \tau \Bigl( \bigl(I + \frac{\delta^2}{2} B\bigr) 
\nabla u^{n+1}_h, \nabla v_h \Bigr) + \tau \frac{1}{\epsilon^2} (f^{n+1}, v_h) \\
&\qquad \qquad + \tau \frac{\delta^2}{2} \Bigl( (\mbox{\rm div} B - b) 
\cdot \nabla u^{n+1}_h, v_h \Bigr) \notag \\
&\quad = (u^n_h, v_h) + \delta (\nabla u^n_h \cdot X, v_h) \, \Delta W_{n+1} 
\qquad \forall \, v_h \in V_h, \notag
\end{align}
where we replace the term $\tau \frac{\delta^2}{2} \bigl( (\mbox{\rm div} B - b) 
\cdot \nabla u^{n}_h, v_h \bigr) $ in \eqref{dfem} by $\tau \frac{\delta^2}{2} 
\bigl( (\mbox{\rm div} B - b) \cdot \nabla u^{n+1}_h, v_h \bigr) $ in \eqref{dfem2}. 
Clearly, \eqref{dfem} has a simpler form and the stiffness matrix is symmetric. 
On the other hand, \eqref{dfem2} has one more convection contribution making the 
corresponding stiffness matrix non-symmetric.
\end{remark}

\begin{remark}
Due to the time discretization, it is unclear whether the discrete analog of energy 
bound in Lemma~\ref{lem:energy} is valid. However, the error estimate below does not 
require any discrete solution estimate.
\end{remark}

Let $e^n := u(t_n) - u_h^n$ ($n=0,1,2,...,N$), we now derive error estimates for $e^n$.
\begin{theorem}\label{thm:derrest}
Let $u$ and $\{ u_h^n \}_{n=1}^N$ denote respectively the solutions of problem \eqref{sac}, 
\eqref{dac-b}--\eqref{dac-i} and  scheme \eqref{dfem}, and $u$ satisfies \eqref{A:Reg}. 
In the case of $f^{n+1} = (u_h^{n+1})^3 - u_h^{n+1}$, under the following mesh constraint
\begin{align} \label{meshc1}
\tau \leq C \left( \epsilon^{-2} + \delta^4 \right)^{-1},
\end{align}
there holds
\begin{align} \label{derrest1}
&\sup_{0 \leq n \leq N} \E \left[ \| e^n \|^2_{L^2(\D)} \right] 
+ \E \left[ \sum_{n=1}^N \tau \|\nabla e^n\|^2_{L^2(\D)}  \right] \\
& \leq T \bigg[ \Bigl( C_3(1+\delta^2+\delta^4) + \frac{C_4}{\epsilon^2} \Bigr) \tau \notag \\
   &\qquad + C_0 (1 + \delta^2 + \delta^4) h^{2 \min{(r,s-1)}} 
+ \Bigl(C_0 + \frac{C_0^2}{\epsilon^2} \Bigr) h^{2 \min{(r+1,s)}} \bigg] 
e^{C (\delta^4 + \frac{1}{\epsilon^2}) T} \notag \\
   & \leq C(\delta, \epsilon, T, \|X\|, C_0, C_2, C_3, C_4) \bigl[\tau + h^{2 \min{(r,s-1)}}\bigr]. 
\notag
\end{align}
In the case of $f^{n+1} = (u_h^{n+1})^3 - u_h^{n}$, under the following weaker mesh constraint
\begin{align}
\label{meshc2}
\tau \leq C (1 + \delta^4)^{-1},
\end{align}
we have
\begin{align} \label{derrest2}
&\sup_{0 \leq n \leq N} \E \left[ \| e^n \|^2_{L^2(\D)} \right]
 + \E \left[ \sum_{n=1}^N \tau \|\nabla e^n\|^2_{L^2(\D)}  \right] \\
&\leq T e^{C (1 + \delta^4 + \frac{1}{\epsilon^4}) T} \bigg[ \Bigl(C_3(1+\delta^2+\delta^4) 
+ \frac{C_2 + C_4}{\epsilon^4} \Bigr) \tau \notag \\
&\qquad + C_0 (1 + \delta^2 + \delta^4) h^{2 \min{(r,s-1)}} 
+ \Bigl( C_0 + \frac{C_0^2+C_0}{\epsilon^4} \Bigr) h^{2 \min{(r+1,s)}} \bigg] \notag \\
& \leq C(\delta, \epsilon, T, \|X\|, C_0, C_2, C_3,C_4) \bigl[\tau + h^{2 \min{(r,s-1)}}\bigr]. 
\notag
\end{align}
\end{theorem}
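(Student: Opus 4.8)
The plan is to carry out a standard energy-type error analysis adapted to the stochastic setting, using the H\"older continuity lemmas as substitutes for the (nonexistent) time derivative of $u$. First I would write the error equation. Subtract the scheme \eqref{dfem} from the variational form \eqref{var_solu} restricted to the interval $[t_n,t_{n+1}]$ and tested against $v_h\in V^r_h$. Decompose $e^{n+1}=u(t_{n+1})-u_h^{n+1}=\rho^{n+1}+\theta^{n+1}$ where $\rho^{n+1}:=u(t_{n+1})-P_h u(t_{n+1})$ is the projection error, controlled by \eqref{Ph1}--\eqref{Ph2} and the regularity \eqref{A:Reg}, and $\theta^{n+1}:=P_h u(t_{n+1})-u_h^{n+1}\in V^r_h$ is the discrete-in-space part, which will be the quantity one estimates by energy arguments. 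Taking $v_h=\theta^{n+1}$, the key identity $(\theta^{n+1}-\theta^n,\theta^{n+1})=\tfrac12(\|\theta^{n+1}\|^2-\|\theta^n\|^2+\|\theta^{n+1}-\theta^n\|^2)$ produces the telescoping structure.

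Next I would group the resulting terms into: (i) the principal diffusion term $\tau\|\nabla\theta^{n+1}\|^2$ plus the nonnegative contribution $\tfrac{\tau\delta^2}{2}\|\nabla\theta^{n+1}\cdot X\|^2$, which are kept on the left; (ii) the consistency/time-discretization residuals, namely $\int_{t_n}^{t_{n+1}}(\Delta u(\zeta)-\Delta u(t_{n+1}),\cdot)\,d\zeta$ and the analogous increments for the convection terms, which are bounded using Lemma~\ref{lem:e3} (giving the $C_3\tau$ contribution); (iii) the nonlinear term $\tfrac{\tau}{\epsilon^2}(f^{n+1}-f(u(t_{n+1})),\theta^{n+1})$, split into the time-consistency part $f(u(t_{n+1}))-f(u(\zeta))$ handled by Lemma~\ref{lem:e4} (the $C_4/\epsilon^2$ term), and the genuinely nonlinear difference $f^{n+1}-f(u(t_{n+1}))$ in terms of $e^{n+1}$, using monotonicity of $v\mapsto v^3$ after the convex splitting; for the fully implicit choice $f^{n+1}=(u_h^{n+1})^3-u_h^{n+1}$ the $-u_h^{n+1}$ part is only Lipschitz with constant absorbed into Gronwall, while for the convex-splitting choice $f^{n+1}=(u_h^{n+1})^3-u_h^n$ one picks up an extra $\tau\|\theta^{n+1}-\theta^n\|$-type term requiring the mesh constraint \eqref{meshc2} to hide in the $\tfrac12\|\theta^{n+1}-\theta^n\|^2$ on the left, which explains why the two cases need different constraints and why \eqref{derrest2} carries worse $\epsilon$-powers; (iv) the stochastic term $\delta(\nabla u_h^n\cdot X-\nabla u(\zeta)\cdot X,\theta^{n+1})\Delta W_{n+1}$, where I would first replace $\theta^{n+1}$ by $\theta^n$ inside the martingale increment (the difference $\theta^{n+1}-\theta^n$ again absorbed on the left after Cauchy--Schwarz and using $\E[(\Delta W_{n+1})^2]=\tau$), so that the remaining term has zero conditional expectation and vanishes upon taking $\E$; the It\^o isometry then converts the leftover noise-consistency piece into $\tau\E[\|\nabla(u(\zeta)-u(t_n))\cdot X\|^2]$-type quantities, controlled by Lemma~\ref{lem:e3} and the $p=2$ case of Lemma~\ref{lem:e2}.

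After taking expectations, summing over $n=0,\dots,m-1$, and using $e^0=\rho^0$ (since $u_h^0=P_h u_0$), I would arrive at an inequality of the form $\E\|\theta^m\|^2+\sum\tau\E\|\nabla\theta^{n+1}\|^2\le C(\delta^4+\epsilon^{-2})\sum\tau\E\|\theta^{n+1}\|^2+(\text{data: }\tau\text{- and }h\text{-terms})$, and conclude by the discrete Gronwall inequality (valid once the mesh constraint makes the implicit-in-$m$ coefficient less than one), which supplies the factor $e^{C(\delta^4+\epsilon^{-2})T}$. Finally combine with the projection estimates for $\rho^n$ via the triangle inequality to pass from $\theta^n$ back to $e^n$, yielding \eqref{derrest1} and \eqref{derrest2}.

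The main obstacle I anticipate is the careful handling of the stochastic increment together with the cubic nonlinearity: one must exploit the $\F_{t_n}$-measurability of $u_h^n$ to kill the martingale part, yet the term couples $\Delta W_{n+1}$ with $\theta^{n+1}$ (not $\theta^n$), so the ``swap $\theta^{n+1}\to\theta^n$'' step must be done without losing a power of $\tau$ — this is precisely where the nonnegative $\tfrac12\|\theta^{n+1}-\theta^n\|^2$ term earns its keep, and where the precise form of the mesh constraints originates. A secondary technical point is tracking all the $\epsilon^{-1}$ powers faithfully through Lemmas~\ref{lem:energy}--\ref{lem:e4} so that the stated constants $C_2,C_3,C_4$ appear in the right places.
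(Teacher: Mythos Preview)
Your proposal is essentially the paper's proof: the same $\rho/\theta$ (there $\eta/\xi$) decomposition, the same test function $\theta^{n+1}$, the same telescoping identity, the same use of Lemmas~\ref{lem:e3}--\ref{lem:e4} for the consistency residuals, and the same ``swap $\theta^{n+1}\to\theta^n$ in the stochastic increment, then absorb the difference in $\tfrac12\|\theta^{n+1}-\theta^n\|^2$'' device for the noise term.

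The one place your sketch deviates is the convex-splitting case. The extra term there is $\tfrac{\tau}{\epsilon^2}(u_h^{n+1}-u_h^n,\theta^{n+1})$, not merely a $\tau\|\theta^{n+1}-\theta^n\|$ term, and the $\tfrac12\|\theta^{n+1}-\theta^n\|^2$ budget is already fully spent on the stochastic term, so you cannot hide it there as you propose. The paper instead writes $u_h^{n+1}-u_h^n=(u(t_{n+1})-u(t_n))-(\rho^{n+1}-\rho^n)-(\theta^{n+1}-\theta^n)$; the exact-solution increment is controlled by Lemma~\ref{lem:e2} (this is precisely where $C_2$ enters \eqref{derrest2}), and $-(\theta^{n+1}-\theta^n,\theta^{n+1})$ is bounded directly as $-\|\theta^{n+1}\|^2+(\theta^n,\theta^{n+1})$ followed by Young's inequality. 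The weaker mesh constraint \eqref{meshc2} then comes not from any absorption into $\tfrac12\|\theta^{n+1}-\theta^n\|^2$ but from choosing the Young constant $C_\dagger=\epsilon^2$ throughout the nonlinear estimate, which strips the $\epsilon^{-2}$ out of the Gronwall coefficient at the price of the $\epsilon^{-4}$ appearing in the data terms. This is a minor reroute rather than a gap; your overall strategy is sound.
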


\begin{proof}
We write $e^n=u(t_n)-u_h^n = \eta^n + \xi^n$ where
\begin{align*}
\eta^n := u(t_n) - P_h u(t_n) \quad \text{and} \quad \xi^n := P_h u(t_n) - u_h^n,  \quad n = 0,1,2,...,N.
\end{align*}
It follows from \eqref{var_solu} that for all $t_n$ ($n \geq 0$) there holds $\P$-almost surely
\begin{align}
\label{derrest:1}
&\bigl(u(t_{n+1}), v_h) - (u(t_n), v_h\bigr)+\int_{t_n}^{t_{n+1}} \bigl(\nabla u(s), \nabla v_h\bigr) \, ds \\
&\qquad +  \frac{1}{\epsilon^2} \int_{t_n}^{t_{n+1}} \bigl(f(u(s)), v_h\bigr) \, ds 
+ \frac{\delta^2}{2} \int_{t_n}^{t_{n+1}} \bigl(B \nabla u(s), \nabla v_h\bigr) \, ds \notag \\
&\qquad + \frac{\delta^2}{2} \int_{t_n}^{t_{n+1}} \bigl((\mbox{div} B - b) 
\cdot \nabla u(s), v_h\bigr) \, ds \notag \\
&= \delta \int_{t_n}^{t_{n+1}} \bigl(\nabla u(s) \cdot X, v_h\bigr) \, dW(s) 
\qquad\qquad  \forall \, v_h \in V_h. \notag
\end{align}
Subtracting \eqref{dfem} from \eqref{derrest:1} and using the decomposition of $e^{n+1}$, 
we obtain the following error equation:
\begin{align} \label{derrest:2}
&\bigl(\xi^{n+1} - \xi^n, v_h\bigl) = -\bigl(\eta^{n+1} - \eta^n, v_h\bigr) 
- \int_{t_n}^{t_{n+1}} \bigl(\nabla u(s) - \nabla u_h^{n+1}, \nabla v_h\bigr) \, ds \\
&\qquad -\frac{1}{\epsilon^2} \int_{t_n}^{t_{n+1}} \bigl(f(u(s))-f^{n+1}, v_h\bigr) \, ds \notag \\
   &\qquad - \frac{\delta^2}{2} \int_{t_n}^{t_{n+1}} \bigl(B (\nabla u(s) - \nabla u_h^{n+1}), 
\nabla v_h\bigr) \, ds \notag \\
   &\qquad - \frac{\delta^2}{2} \int_{t_n}^{t_{n+1}} \bigl( (\mbox{div} B - b) 
\cdot (\nabla u(s) - \nabla u_h^{n}), v_h \bigr) \, ds \notag \\
   &\qquad + \delta  \int_{t_n}^{t_{n+1}} \bigl((\nabla u(s) - \nabla u_h^n) \cdot X, v_h\bigr) \, dW(s) \qquad \qquad \forall \, v_h \in V^r_h. \notag
\end{align}
Setting $v_h = \xi^{n+1} (\omega)$ in \eqref{derrest:2}, we get $\P$-almost surely
\begin{align} \label{derrest:3}
(\xi^{n+1} - \xi^n, \xi^{n+1}) &= -(\eta^{n+1} - \eta^n, \xi^{n+1}) 
- \int_{t_n}^{t_{n+1}} \bigl(\nabla u(s) - \nabla u_h^{n+1}, \nabla \xi^{n+1} \bigr) \, ds \\
&\qquad - \frac{1}{\epsilon^2} \int_{t_n}^{t_{n+1}} \bigl(f(u(s)) - f^{n+1}, \xi^{n+1} \bigr) \, ds \notag \\
   &\qquad - \frac{\delta^2}{2} \int_{t_n}^{t_{n+1}} \bigl(B (\nabla u(s) - \nabla u_h^{n+1}), 
\nabla \xi^{n+1}\bigr) \, ds \notag \\
   &\qquad - \frac{\delta^2}{2} \int_{t_n}^{t_{n+1}} \bigl( (\mbox{div} B - b) 
\cdot (\nabla u(s) - \nabla u_h^{n}), \xi^{n+1} \bigr) \, ds \notag \\
   &\qquad + \delta  \int_{t_n}^{t_{n+1}} \bigl((\nabla u(s) - \nabla u_h^n) \cdot X, \xi^{n+1} \bigr) \, dW(s), \notag \\
   & := T_1 + T_2 + T_3 + T_4 + T_5 + T_6. \notag 
\end{align}

By the elementary identity 
$(a-b)a = \frac{1}{2} (a^2-b^2) + \frac{1}{2} (a-b)^2$, we get 
\begin{align} \label{derrest:4}
 \E \left[(\xi^{n+1} - \xi^n, \xi^{n+1}) \right] 
&= \frac{1}{2} \E \left[ \|\xi^{n+1}\|_{L^2(\D)}^2 - \|\xi^{n}\|_{L^2(\D)}^2 \right] \\
&\quad +  \frac{1}{2} \E \left[ \| \xi^{n+1} - \xi^n \|^2_{L^2(\D)} \right]. \notag
\end{align}

Next, we bound the right-hand side of \eqref{derrest:3}. First, since $P_h$ is the 
$L^2$-projection operator, we have $\E \left[ T_1 \right] = 0$.
For the second term on the right-hand side of \eqref{derrest:3}, we have by Lemma~\ref{lem:e3} 
and Young's inequality that
\begin{align} \label{derrest:6}
\E \left[ T_2 \right] &= - \E \left[ \int_{t_n}^{t_{n+1}} (\nabla u(s) - \nabla u(t_{n+1}), \nabla \xi^{n+1}) \, ds \right] \\
&\quad 
- \E\left[\int_{t_n}^{t_{n+1}} (\nabla \eta^{n+1} + \nabla \xi^{n+1}, \nabla \xi^{n+1}) \, ds \right] \notag \\
&\leq C \E \left[ \int_{t_n}^{t_{n+1}} \| \nabla u(s) - \nabla u(t_{n+1}) \|_{L^2(\D)}^2 \, ds \right] 
+ \frac{3}{16} \E \left[ \| \nabla \xi^{n+1} \|^2_{L^2(\D)} \right] \tau \notag \\
&\quad + \E \left[ \| \nabla \eta^{n+1} \|^2_{L^2(\D)} \right] \tau 
+ \frac{1}{4} \E \left[ \| \nabla \xi^{n+1} \|^2_{L^2(\D)} \right] \tau \notag \\
&\quad - \E \left[ \| \nabla \xi^{n+1} \|^2_{L^2(\D)} \right] \tau \notag \\
&\leq C_3 \tau^2 + \E \left[ \| \nabla \eta^{n+1} \|^2_{L^2(\D)} \right] \tau 
- \frac{9}{16} \E \left[ \| \nabla \xi^{n+1} \|^2_{L^2(\D)} \right] \tau. \notag
\end{align}

In order to estimate the third term on the right-hand side of \eqref{derrest:3}, we write 
\begin{align}
\label{derrest:7}
-\bigl( f(u(s)) - f^{n+1}, \xi^{n+1} \bigr) &= -\bigl( f(u(s)) - f(u(t_{n+1})), \xi^{n+1} \bigr) \\
   &\quad - \bigl( f(u(t_{n+1}) - f(P_h u(t_{n+1})), \xi^{n+1} \bigr) \notag \\
   &\quad -  \bigl( f(P_h u(t_{n+1})) - f^{n+1}, \xi^{n+1} \bigr). \notag
\end{align}
By Lemma~\ref{lem:e4}, we obtain
\begin{align}
\label{derrest:8}
&- \E \left[ \bigl( f(u(s)) - f(u(t_{n+1})), \xi^{n+1} \bigr) \right] \\
&\qquad \leq \frac{1}{2C_{\dagger}} \E \left[ \| f(u(s)) - f(u(t_{n+1})) \|^2_{L^2(\D)} \right]  
+ \frac{C_{\dagger}}{2} \E \left[ \|\xi^{n+1}\|_{L^2(\D)}^2 \right] \notag \\
&\qquad \leq \frac{C_4}{2C_{\dagger}} \tau 
+ \frac{C_{\dagger}}{2} \E \left[ \|\xi^{n+1}\|_{L^2(\D)}^2 \right]. \notag
\end{align}

Next, by \eqref{A:Reg} and \eqref{Ph1}--\eqref{Ph2}, we have
\begin{align}
\label{derrest:9}
- \E &\left[ \bigl( f(u(t_{n+1}) - f(P_h u(t_{n+1})), \xi^{n+1} \bigr) \right] \\
&= - \E \left[ \bigl( \eta^{n+1} (u(t_{n+1})^2+u(t_{n+1})P_h u(t_{n+1})+P_h u(t_{n+1})^2-1),\xi^{n+1} \bigr) \right] \notag\\
& \leq \frac{1}{4 C_{\dagger}} \E \Bigl[ \|u(t_{n+1})^2+u(t_{n+1})P_h u(t_{n+1})+P_h u(t_{n+1})^2-1\|_{L^{\infty}(\D)}^2 \notag \\
&\quad \times \|\eta^{n+1}\|_{L^2(\D)}^2 \Bigr] + C_{\dagger} \E \left[ \|\xi^{n+1}\|_{L^2(\D)}^2 \right] \notag\\
& \leq \frac{1}{4 C_{\dagger}} \left( \E \left[ \|u(t_{n+1})^2+u(t_{n+1})P_h u(t_{n+1})
+P_h u(t_{n+1})^2-1\|_{L^{\infty}(\D)}^{3} \right] \right)^{\frac23} \notag \\
& \quad \times \Bigl( \E \left[\|\eta^{n+1}\|_{L^2(\D)}^{6} \right] \Bigr)^{\frac13} 
+ C_{\dagger} \E \left[ \|\xi^{n+1}\|_{L^2(\D)}^2 \right] \notag \\
& \leq \frac{C}{4 C_{\dagger}} \Bigl( \E \left[ \|P_h u(t_{n+1})\|_{L^\infty(\D)}^{6}
+\|u(t_{n+1})\|_{L^\infty(\D)}^{6}+|D|^{3} \right] \Bigr)^{\frac{2}{3}}  \notag \\
&\quad \times \Bigl( \E \left[\|\eta^{n+1}\|_{L^2(\D)}^{6} \right] \Bigr)^{\frac13} 
+ C_{\dagger} \E \left[ \|\xi^{n+1}\|_{L^2(\D)}^2 \right] \notag\\
&\leq \frac{C_0}{4 C_{\dagger}} \left( \E \left[\|\eta^{n+1}\|_{L^2(\D)}^{6} \right] \right)^{\frac13} 
+ C_{\dagger}  \E \left[ \|\xi^{n+1}\|_{L^2(\D)}^2 \right].\notag
\end{align}

When $f^{n+1} = (u_h^{n+1})^3 - u_h^{n+1}$, the last term on the right-hand 
side of \eqref{derrest:7} can be bounded by the monotonicity property 
\begin{align}
\label{derrest:9_1}
- \E \left[ \bigl( f(P_h u(t_{n+1})) - f^{n+1}, \xi^{n+1} \bigr) \right] \leq \E \left[ \| \xi^{n+1} \|^2_{L^2(\D)} \right].
\end{align}

When $f^{n+1} = (u_h^{n+1})^3 - u_h^{n}$, the last term on the right-hand side of 
\eqref{derrest:7} results in an extra term $\left( u_h^{n+1} - u_h^n, \xi^{n+1} \right)$. 
It can be controlled by using Lemma~\ref{lem:e2} as follows
\begin{align}
\label{derrest:10}
\E &\left[ \bigl( u_h^{n+1} - u_h^n, \xi^{n+1} \bigr) \right] 
= - \E \left[ \bigl( (\eta^{n+1} - \eta^n) + (\xi^{n+1} - \xi^n), \xi^{n+1} \bigr) \right] \\
&\qquad + \E \left[ \bigl( u(t_{n+1}) - u(t_n), \xi^{n+1} \bigr) \right] \notag \\
&\quad \leq \frac{1}{2 C_{\dagger}} \E \left[ \|\eta^{n+1} - \eta^n\|^2_{L^2(\D)} \right] 
+ C_{\dagger} \E \left[ \|\xi^{n+1}\|^2_{L^2(\D)} \right] + \frac{C_2}{C_{\dagger}} \tau \notag \\
&\qquad - \E \left[ \|\xi^{n+1}\|^2_{L^2(\D)} \right] + \frac{1}{4 C_{\dagger}} 
\E \left[ \|\xi^{n}\|^2_{L^2(\D)} \right] + C_{\dagger} \E \left[ \|\xi^{n+1}\|^2_{L^2(\D)} \right]. \notag
\end{align}
Combining \eqref{derrest:7}--\eqref{derrest:10}, we choose $C_{\dagger} = 1$ 
when $f^{n+1} = (u_h^{n+1})^3 - u_h^{n+1}$ to obtain
\begin{align} \label{derrest:11}
\E \left[ T_3 \right] \leq \frac{C_4}{\epsilon^2} \tau^{2} 
+ \frac{1}{\epsilon^2} \E \left[ \|\xi^{n+1}\|_{L^2(\D)}^2 \right] \tau 
+ \frac{C_0}{\epsilon^2} \left( \E \left[\|\eta^{n+1}\|_{L^2(\D)}^{4} \right] \right)^{\frac12} \tau,
\end{align}
and choose $C_{\dagger} = \epsilon^2$ when $f^{n+1} = (u_h^{n+1})^3 - u_h^{n}$ to obtain
\begin{align} \label{derrest:12}
\E \left[ T_3 \right] &\leq \frac{C_2 + C_4}{\epsilon^4} \tau^2 
+ \E \left[ \|\xi^{n+1}\|_{L^2(\D)}^2 \right] \tau 
+ \frac{1}{\epsilon^4} \E \left[ \|\xi^{n}\|_{L^2(\D)}^2 \right] \tau \\
&\quad 
+ \frac{C_0}{\epsilon^4} \left( \E \left[\|\eta^{n+1}\|_{L^2(\D)}^{4} \right] \right)^{\frac12} \tau 
+ \frac{1}{\epsilon^4} \E \left[ \|\eta^{n+1}\|_{L^2(\D)}^2 + \|\eta^{n}\|_{L^2(\D)}^2 \right] \tau. \notag
\end{align}

Similar to the estimate of $T_2$, the fourth and fifth terms on the right-hand side of 
\eqref{derrest:3} can also be bounded by Lemma~\ref{lem:e3} and Young's inequality as follows
\begin{align}
\label{derrest:13}
\E \left[ T_4 \right] &= - \frac{\delta^2}{2} \E \left[ \int_{t_n}^{t_{n+1}} 
\bigl( (\nabla u(s) - \nabla u(t_{n+1}) \cdot X, \nabla \xi^{n+1} \cdot X \bigr) \, ds \right] \\
   &\quad - \frac{\delta^2}{2} \E \left[ \int_{t_n}^{t_{n+1}} \bigl( (\nabla \eta^{n+1} 
+ \nabla \xi^{n+1}) \cdot X, \nabla \xi^{n+1} \cdot X \bigr) \, ds \right] \notag \\
   &\leq \delta^4 \|X\|^4_{C(\bar{\D})} C_3 \tau^2 + \frac{2}{16} 
\E \left[ \| \nabla \xi^{n+1} \|^2_{L^2(\D)} \right] \tau \notag \\
   &\quad + \delta^4 \|X\|^4_{C(\bar{\D})}\E \left[ \| \nabla \eta^{n+1} \|^2_{L^2(\D)} \right] \tau 
- \frac{\delta^2}{2} \E \left[ \| \nabla \xi^{n+1} \cdot X \|^2_{L^2(\D)} \right] \tau, \notag
\end{align}
and
\begin{align}
\label{derrest:14}
\E \left[ T_5 \right] &= - \frac{\delta^2}{2} \E \left[ \int_{t_n}^{t_{n+1}} 
\Bigl( (\mbox{div} B - b) \cdot (\nabla u(s) - \nabla u(t_{n})), \xi^{n+1} \Bigr) \, ds \right] \\
   &\quad - \frac{\delta^2}{2} \E \left[ \int_{t_n}^{t_{n+1}} 
\Bigl( (\mbox{div} B - b) \cdot (\nabla \eta^{n} + \nabla \xi^{n}), \xi^{n+1} \Bigr) \, ds \right] \notag \\
   &\leq \|X\|^4_{C^1(\bar{\D})} C_3 \tau^2 + (1 + \|X\|^4_{C^1(\bar{\D})}) \delta^4 
\E \left[ \|\xi^{n+1}\|_{L^2(\D)}^2 \right] \tau \notag \\
   &\quad + \|X\|^4_{C^1(\bar{\D})} \E \left[ \| \nabla \eta^{n} \|^2_{L^2(\D)} \right] \tau 
+ \frac{1}{16}  \E \left[ \| \nabla \xi^{n} \|^2_{L^2(\D)} \right] \tau. \notag
\end{align}

By the martingale property, It\^{o}'s isometry and Lemma~\ref{lem:e3}, we have
\begin{align}
\label{derrest:15}
&\E [T_6] \leq \frac{1}{2} \E \left[ \|\xi^{n+1} - \xi^n\|^2_{L^2(\D)} \right] \\
&\quad + \frac{\delta^2}{2} \E \left[ \int_{t_n}^{t_{n+1}} 
\| (\nabla u(s) - \nabla u_h^n) \cdot X \|^2_{L^2(\D)} \, ds \right] \notag \\
   &\leq \frac{1}{2} \E \left[ \|\xi^{n+1} - \xi^n\|^2_{L^2(\D)} \right] \notag \\
   &\quad + \frac{\delta^2}{2} (1 + C') \E \left[ \int_{t_n}^{t_{n+1}} 
\| (\nabla u(s) - \nabla u(t_n)) \cdot X \|^2_{L^2(\D)} \, ds \right] \notag \\
   &\quad + \frac{\delta^2}{2} \E \left[ \|(\nabla \eta^n + \nabla \xi^n) \cdot X\|^2_{L^2(\D)} \right] \tau 
+ \frac{\delta^2}{2 C'} \E \left[ \|(\nabla \eta^n + \nabla \xi^n) \cdot X\|^2_{L^2(\D)} \right] \tau \notag \\
   &\leq \frac{1}{2} \E \left[ \|\xi^{n+1} - \xi^n\|^2_{L^2(\D)} \right] 
+ \frac{\delta^2}{2} (1 + C') \|X\|^2_{C(\bar{\D})} C_3 \tau^2 \notag \\
   &\quad + \delta^2 \Bigl( \frac{1+C''}{2} + \frac{1}{C'} \Bigr) 
\|X\|^2_{C(\bar{\D})} \E \left[ \| \nabla \eta^n \|^2_{L^2(\D)} \right] \tau \notag \\
   &\quad + \frac{\delta^2}{2} \E \left[ \| \nabla \xi^{n} \cdot X \|^2_{L^2(\D)} \right] \tau 
+ \delta^2 \left( \frac{1}{2C''} + \frac{1}{C'} \right) \|X\|^2_{C(\bar{\D})} 
\E \left[ \| \nabla \xi^n \|^2_{L^2(\D)} \right] \tau. \notag
\end{align}
Now taking $C' = 16 \delta^2 \|X\|^2_{C(\bar{\D})}$ and $C'' = 8 \delta^2 \|X\|^2_{C(\bar{\D})}$ 
in \eqref{derrest:15}, we obtain an estimate for the last term on the right-hand side of \eqref{derrest:3}:
\begin{align} \label{derrest:16}
\E [T_6] &\leq \frac{1}{2} \E \left[ \|\xi^{n+1} - \xi^n\|^2_{L^2(\D)} \right] 
+ \frac{\delta^2}{2} (1 + 16 \delta^2 \|X\|^2_{C(\bar{\D})}) \|X\|^2_{C(\bar{\D})} C_3 \tau^2 \\
   &\quad + \delta^2 \Bigl( \frac{1+8 \delta^2 \|X\|^2_{C(\bar{\D})}}{2} 
+ \frac{1}{16 \delta^2 \|X\|^2_{C(\bar{\D})}} \Bigr) \|X\|^2_{C(\bar{\D})} 
\E \left[ \| \nabla \eta^n \|^2_{L^2(\D)} \right] \tau \notag \\
   &\quad + \frac{\delta^2}{2} \E \left[ \| \nabla \xi^{n} \cdot X \|^2_{L^2(\D)} \right] \tau 
+ \frac{2}{16} \E \left[ \| \nabla \xi^n \|^2_{L^2(\D)} \right] \tau. \notag
\end{align}

Taking expectation on \eqref{derrest:3} and combining estimates \eqref{derrest:4}--\eqref{derrest:6}, 
\eqref{derrest:11}--\eqref{derrest:14} and \eqref{derrest:16}, summing over 
$n = 0, 1, 2, ..., l-1$ with $1 \leq l \leq N$, and using the properties of the $L^2$-projection 
and the regularity assumption \eqref{A:Reg}, we obtain
\begin{align} \label{derrest:17}
&\left[ \frac{1}{2} - \left( \frac{1}{\epsilon^2} + \delta^4 \right) \tau \right] 
\E \left[ \| \xi^l \|^2_{L^2(\D)} \right] 
+ \frac{1}{4} \E \left[ \tau \sum_{n = 1}^l \| \nabla \xi^n \|^2_{L^2(\D)} \right] \\
   &\quad \leq \frac{1}{2} \E \left[ \| \xi^0 \|^2_{L^2(\D)} \right] 
+ \frac{\delta}{2} \tau \E \left[ \| \nabla \xi^0 \cdot X \|^2_{L^2(\D)} \right] 
+ \frac{3}{16} \tau \E \left[ \| \nabla \xi^0 \|^2_{L^2(\D)} \right] \notag \\
   &\qquad + \left( C_3(1+\delta^2+\delta^4) + \frac{C_4}{\epsilon^2} \right) T \tau \notag \\
   &\qquad + C_0 (1+\delta^2+\delta^4) T h^{2 \min{(r,s-1)}} 
+ \frac{C_0^2}{\epsilon^2} T h^{2 \min{(r+1,s)}} \notag \\
   &\qquad + \left( \frac{1}{\epsilon^2} + \delta^4 \right) 
\E \left[ \tau \sum_{n=0}^{l-1} \| \xi^n \|^2_{L^2(\D)} \right], \notag
\end{align}
when $f^{n+1} = (u_h^{n+1})^3 - u_h^{n+1}$, and
\begin{align} \label{derrest:18}
&\left[ \frac{1}{2} - \left( 1 + \delta^4 \right) \tau \right] \E \left[ \| \xi^l \|^2_{L^2(\D)} \right] 
+ \frac{1}{4} \E \left[ \tau \sum_{n = 1}^l \| \nabla \xi^n \|^2_{L^2(\D)} \right] \\
   &\quad \leq \frac{1}{2} \E \left[ \| \xi^0 \|^2_{L^2(\D)} \right] 
+ \frac{\delta}{2} \tau \E \left[ \| \nabla \xi^0 \cdot X \|^2_{L^2(\D)} \right] 
+ \frac{3}{16} \tau \E \left[ \| \nabla \xi^0 \|^2_{L^2(\D)} \right] \notag \\
   &\quad \quad + \left( C_3(1+\delta^2+\delta^4) + \frac{C_2 + C_4}{\epsilon^4} \right) T \tau \notag \\
   &\qquad + C_0 (1+\delta^2+\delta^4) T h^{2 \min{(r,s-1)}} 
+ \frac{C_0^2+C_0}{\epsilon^4} T h^{2 \min{(r+1,s)}} \notag \\
   &\qquad + \left( 1+ \frac{1}{\epsilon^4} + \delta^4 \right) 
\E \left[ \tau \sum_{n=0}^{l-1} \| \xi^n \|^2_{L^2(\D)} \right], \notag
\end{align}
when $f^{n+1} = (u_h^{n+1})^3 - u_h^{n}$, here we have not explicitly tracked the constant 
$\| X \|_{C^1(\bar{\D})}$ or $\| X \|_{C(\bar{\D})}$ and some other constants 
in \eqref{derrest:17} and \eqref{derrest:18}.

Finally, estimates \eqref{derrest1} and \eqref{derrest2} follow from \eqref{derrest:17}--\eqref{derrest:18}, 
the discrete Gronwall's inequality, the $L^2$-projection properties, the fact $\xi^0 = 0$ 
and the triangle inequality.
\end{proof}

\begin{remark}
Error estimates in Theorem \ref{thm:derrest} remain unchanged if we consider the modified scheme 
\eqref{dfem2}. In fact, we only need to check the fifth term on the right-hand side of \eqref{derrest:3}:
\begin{align} \label{derrest:14_2}
\E \left[ T_5 \right] &= - \frac{\delta^2}{2} \E \left[ \int_{t_n}^{t_{n+1}} 
\bigl( (\mbox{\rm div} B - b) \cdot (\nabla u(s) - \nabla u(t_{n+1})), \xi^{n+1} \bigr) \, ds \right] \\
   &\quad - \frac{\delta^2}{2} \E \left[ \int_{t_n}^{t_{n+1}} 
\bigl( (\mbox{\rm div} B - b) \cdot (\nabla \eta^{n+1} + \nabla \xi^{n+1}), \xi^{n+1} \bigr) \, ds \right] \notag \\
   &\leq \|X\|^4_{C^1(\bar{\D})} C_3 \tau^2 + (1 + \|X\|^4_{C^1(\bar{\D})}) \delta^4 
\E \left[ \|\xi^{n+1}\|_{L^2(\D)}^2 \right] \tau \notag \\
   &\quad + \|X\|^4_{C^1(\bar{\D})} \E \left[ \| \nabla \eta^{n+1} \|^2_{L^2(\D)} \right] \tau 
+ \frac{1}{16} \E \left[ \| \nabla \xi^{n+1} \|^2_{L^2(\D)} \right] \tau. \notag
\end{align}
Hence the third term $\frac{3}{16} \tau \E \left[ \| \nabla \xi^0 \|^2_{L^2(\D)} \right]$ on 
the right-hand side of \eqref{derrest:17} or \eqref{derrest:18} is replaced by 
$\frac{1}{8} \tau \E \left[ \| \nabla \xi^0 \|^2_{L^2(\D)} \right]$.
\end{remark}

\begin{remark}
Error estimates \eqref{derrest1} and \eqref{derrest2} are optimal with respect to $\tau$, and 
optimal in the $H^1$-norm but suboptimal in the $L^2$-norm with respect to $h$. From the proof 
of Theorem \ref{thm:derrest}, the suboptimal estimate in the $L^2$-norm is caused by gradient-type 
noises, i.e., the existence of $T_4$, $T_5$ and $T_6$ on the right-hand side of \eqref{derrest:3}. 
The proof in Theorem \ref{thm:derrest} relies on the $p$-th moment estimate \eqref{A:Reg} for the 
strong solution. Convergence rates are expected to be lower under weaker regularity assumptions. 
We also note that the error bounds depend exponentially on $1/\epsilon$, which seems to be pessimistic. 
However, this is the case even in the deterministic case (i.e., $\delta=0$) unless the standard error
estimate technique is replaced by a much involved nonstandard technique as used in \cite{FP2003}. 
We intend to address this issue in a future work. 
\end{remark}

\section{Numerical experiments}\label{sec:numer}
In this section we present a couple two-dimensional numerical experiments to gauge the performance of the 
proposed fully discrete finite element methods with $r=1$, i.e., $V_h$ is the linear finite element space.
We also numerically examine the influence of noises on the dynamics of the numerical interfaces.

We consider the SPDE \eqref{sac} on the square domain $\D = [-0.5,0.5]^2$ with two different initial 
conditions, and in both tests we take $X(x) = \varphi(x)[x_1+x_2,\ x_1-x_2]^{T}$, where
\begin{align*}
\varphi(x) := \begin{cases}
e^{-\frac{0.001}{0.09^2-|x|^2}}, & \text{if}\ 
|x|<0.3,\\
0, & \text{if}\ 
|x|\geq0.3.\\
\end{cases}
\end{align*} 
For both tests, we take the Brownian motion step to be $1\times10^{-4}$ and compute $M=500$ Monte Carlo 
realizations. In this section, we use a new notion $u^{\delta,\epsilon,h,\tau}(\omega)$ to denote the numerical 
solution, where $\delta > 0$ controls the noise intensity, $\epsilon > 0$ is the diffuse interface width, 
$h$ and $\tau$ are the mesh sizes, and $\omega$ is a discrete sample.

Next, we describe the algorithm which we use to solve the discrete problem \eqref{dfem}. 
Let $N_h = \text{dim} V_h$ and $\{ \psi_i \}_{i=1}^{N_h}$ be the nodal basis of $V_h$. 
Denote by $\bf{u^{n+1}}$ the coefficient vector corresponding to the discrete solution 
$u^{n+1}_h = \sum_{i = 0}^{N_h} u^{n+1}_i \psi_i$ at time $t_{n+1} = (n+1) \tau$. Suppose 
$f^{n+1} = (u^{n+1}_h)^3 - u^{n}_h$ (the other case is similar), \eqref{dfem} is then equivalent to
\begin{align}\label{eq4.1}
&\left[ \bf{M} + \tau \left( \bf{A} + \frac{\delta^2}{2} \bf{A_X} \right) \right] \bf{u^{n+1}} 
+ \frac{\tau}{\epsilon^2} \bf{N} (\bf{u^{n+1}}) \\
&\qquad = \left( 1 + \frac{\tau}{\epsilon^2} \right) \bf{M} \bf{u^n} 
- \frac{\tau \delta^2}{2} \bf{C_1} \bf{u^n} + \delta \Delta \bf{W_{n+1}} \bf{C_2} \bf{u^n}, \notag
\end{align}
where $\bf{M}, \bf{A}$ are the standard mass and stiffness matrices, respectively, $\bf{A_X}$ is the 
weighted stiffness matrix whose $(i,j)$ component is $(\nabla \psi_j \cdot X, \nabla \psi_i \cdot X)$, 
$\bf{N} (\bf{u^{n+1}})$ is the nonlinear term, $(\bf{C_1})_{ij} = ((\mbox{div} B - b) \cdot \nabla \psi_j, 
\psi_i)$, $(\bf{C_2})_{ij} = (\nabla \psi_j, \psi_i)$ and $\bf{W}$ is the discrete Brownian motion 
with increments $\Delta \bf{W_{n+1}} = W_{n+1} - W_{n}$. Since we want to generate as many samples 
as possible in order to recover quantities of statistical interests, it will be expensive to use 
Newton's method to solve \eqref{eq4.1}. Instead, we employ a cheaper fixed point iteration, although 
it may converge slower than Newton's method for a particular realization. However, if it is utilized well, 
more efficient overall solution algorithm can be designed when combined with the direct linear solver. 

Notice that the first coefficient matrix on the left-hand side of \eqref{eq4.1} is independent of 
$n$ and $\omega$, compute its Cholesky factorization
\[
\bf{M} + \tau \left( \bf{A} + \frac{\delta^2}{2} \bf{A_X} \right) = \bf{L} \bf{L'}.
\]
$\bf{L}$ will be stored and re-use for every Monte Carlo realization and at every time step. 
Only backward and forward substitutions are needed to execute the fixed point iteration,
hence, resulting in considerable amount of computational saving. From this point of view, the 
algorithm based on the fixed point iteration outperforms that based on Newton's method, especially 
for large sample size $M$. Indeed, we observed in our experiment that the fixed point iteration 
is much more efficient.

\vspace{10pt}
\noindent $\bf{Test\, 1.}$ 
The initial condition $u_0$ is chosen as
\begin{equation*}
 u_0(x) = \tanh\Bigl( \frac{d(x)}{\sqrt{2}\eps} \Bigr),
 \end{equation*}
where $\tanh (x) = (e^x - e^{-x}) / (e^x + e^{-x})$ and $d(x)$ represents the signed distance 
between the point $x$ and the ellipse
 \begin{equation*}
 \frac{x_1^2}{0.04}+\frac{x_2^2}{0.01}=1.
 \end{equation*}

In Table~\ref{tab_1}, we plot the expected values of the $L^{\infty}([0,T],||\cdot||_{L_2(\D)})$-norm 
of the errors and rates of convergence of the time discretization for varying $\tau$ with the fixed 
parameters $\delta=1$ and $\epsilon = 0.1$. The numerical results confirm the theoretical result 
of Theorem~\ref{thm:derrest}.

\begin{table}[htbp]
\begin{center}
\begin{tabular}{|c|c|c|}
\hline
        & Expected values of error      & Order of convergence \\ \hline
$\tau$=0.008 & 0.09895 & \\ \hline
$\tau$=0.004 & 0.06557 & 0.5937\\ \hline
$\tau$=0.002 & 0.04472 & 0.5521\\ \hline
$\tau$=0.001 & 0.03136 & 0.5120\\ \hline
\end{tabular}
\caption{Computed time discretization errors and convergence rates}
\label{tab_1}
\end{center}
\end{table}

In Figure~\ref{fig_t1_1}--\ref{fig_t1_3}, we display a few snapshots of the zero-level set of 
the expected value of the numerical solution 
\[
\bar{u}^{\delta,\epsilon,h,\tau} = \frac{1}{M} \sum_{i=1}^{M} u^{\delta,\epsilon,h,\tau} (\omega_i)
\]
at several time points with $\epsilon = 0.01$, and three different noise intensity parameters 
$\delta = 0.1, 1, 10$. We observe that the shape of the zero-level set of the expected value of the 
numerical solution undergoes more changes as $\delta$ increases.

\begin{figure}[thbp]
\centering
\includegraphics[height=1.8in,width=2.4in]{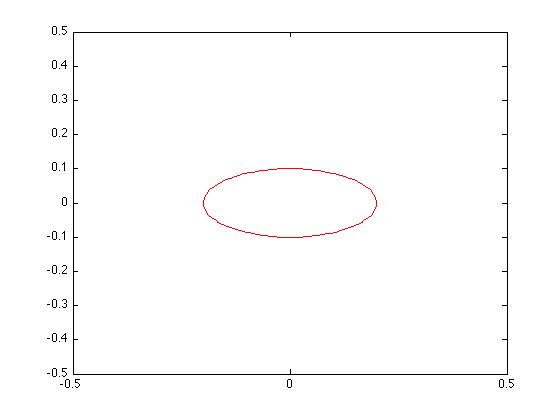}
\includegraphics[height=1.8in,width=2.4in]{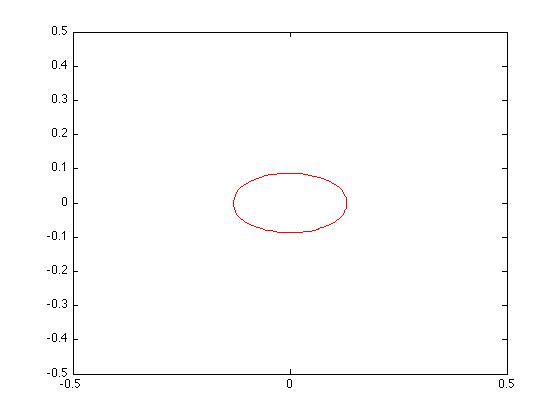}

\includegraphics[height=1.8in,width=2.4in]{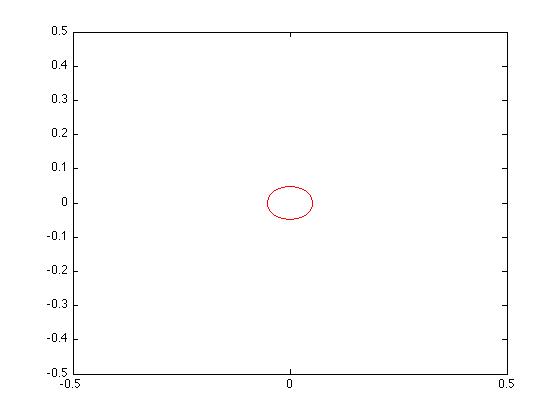}
\includegraphics[height=1.8in,width=2.4in]{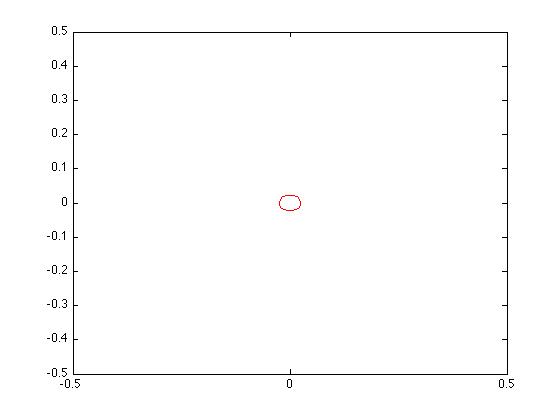}
\caption{Snapshots of the zero-level set of $\bar{u}^{\delta,\epsilon,h,\tau} $ at time 
$t=0, 0.020, 0.040, 0.043$ with $\delta=0.1$ and $\epsilon = 0.01$.}
\label{fig_t1_1}
\end{figure}

\begin{figure}[thbp]
\centering
\includegraphics[height=1.8in,width=2.4in]{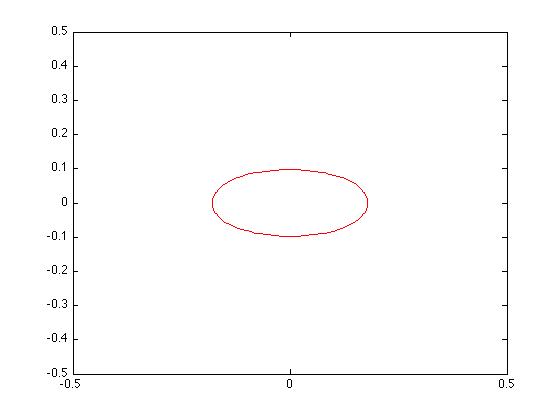}
\includegraphics[height=1.8in,width=2.4in]{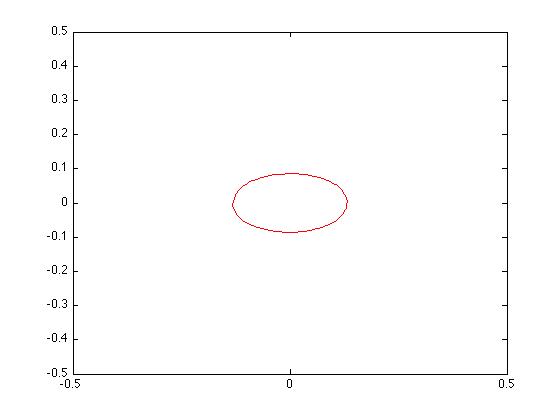}

\includegraphics[height=1.8in,width=2.4in]{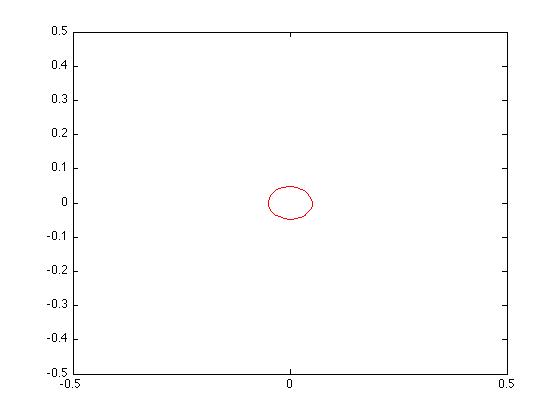}
\includegraphics[height=1.8in,width=2.4in]{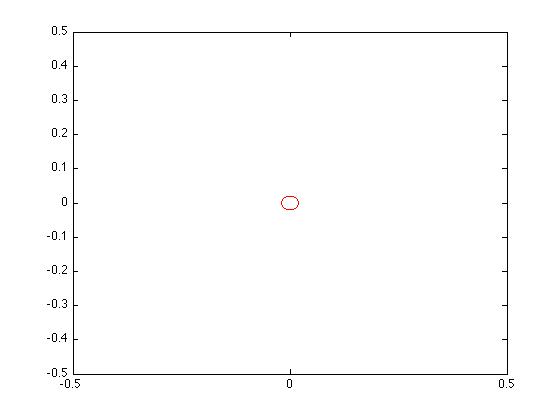}
\caption{Snapshots of the zero-level set of $\bar{u}^{\delta,\epsilon,h,\tau} $ at time 
$t=0.005, 0.020, 0.040, 0.043$ with $\delta=1$ and $\epsilon = 0.01$.}
\label{fig_t1_2}
\end{figure}

\begin{figure}[thbp]
\centering
\includegraphics[height=1.8in,width=2.4in]{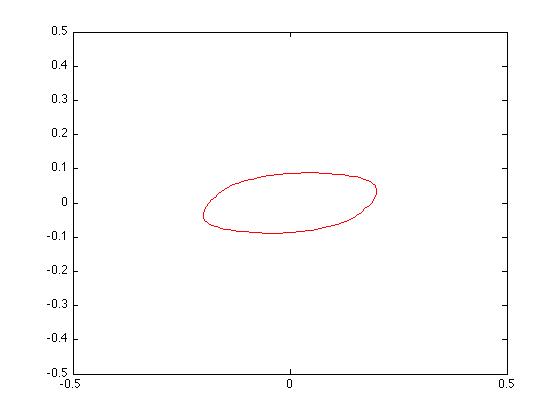}
\includegraphics[height=1.8in,width=2.4in]{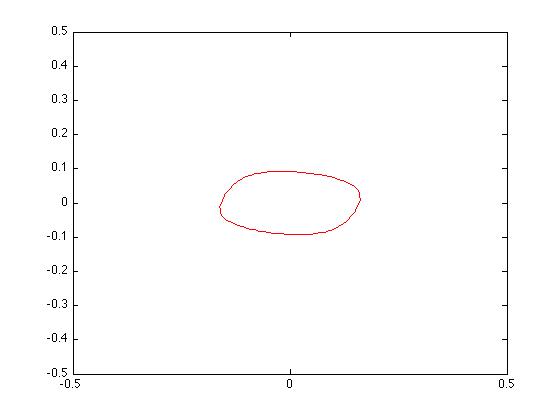}

\includegraphics[height=1.8in,width=2.4in]{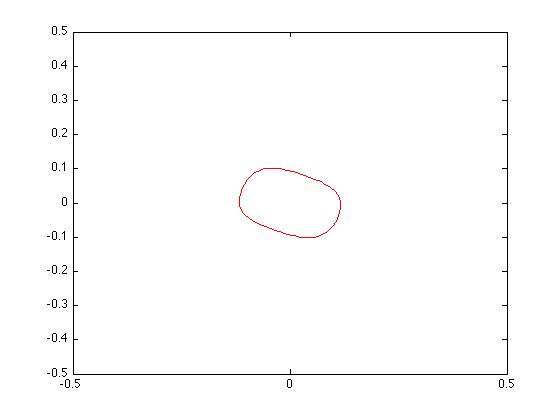}
\includegraphics[height=1.8in,width=2.4in]{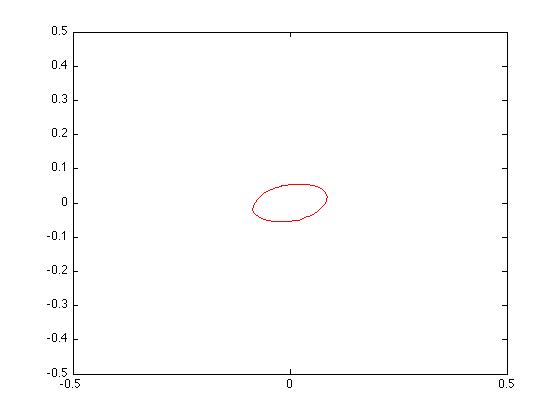}

\includegraphics[height=1.8in,width=2.4in]{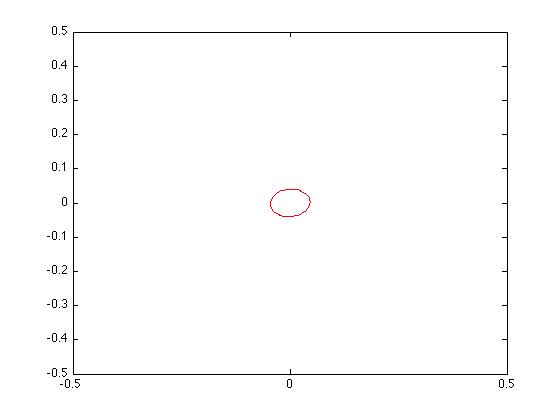}
\includegraphics[height=1.8in,width=2.4in]{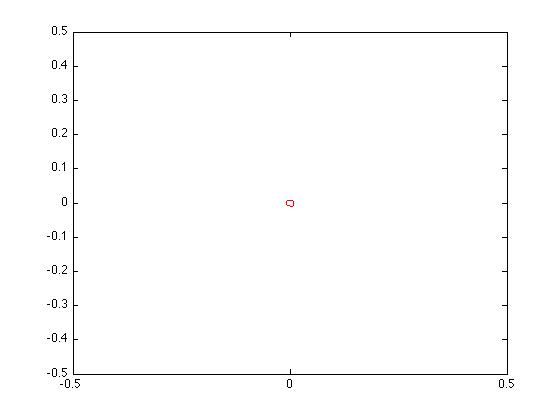}
\caption{Snapshots of the zero-level set of $\bar{u}^{\delta,\epsilon,h,\tau}$ at time 
$t=0.0025, 0.0050, 0.0100, 0.0200, 0.0250, 0.0280$ with $\delta=10$ and $\epsilon = 0.01$.}
\label{fig_t1_3}
\end{figure}

Next, we study the effect of $\epsilon$ on the evolution of the numerical interfaces. To the end, 
we fix $\delta=0.1$. In Figure~\ref{fig_t1_4}, we depict four snapshots at four fixed time points of 
the zero-level set of the expected value of the numerical solution $\bar{u}^{\delta,\epsilon,h,\tau}$ 
at three different $\epsilon = 0.01, 0.011, 0.02$. The numerical interface clearly converges to the
stochastic mean curvature flow as $\epsilon \rightarrow 0$, and it evolves faster in time for larger $\epsilon$. 
\begin{figure}[thbp]
\centering
\includegraphics[height=1.8in,width=2.4in]{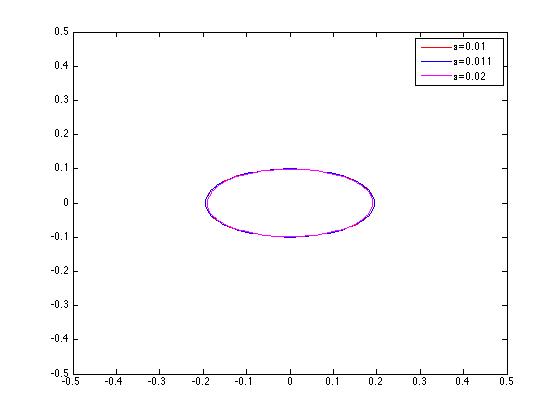}
\includegraphics[height=1.8in,width=2.4in]{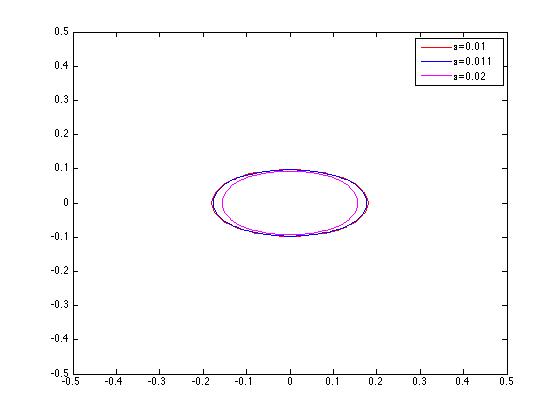}
\includegraphics[height=1.8in,width=2.4in]{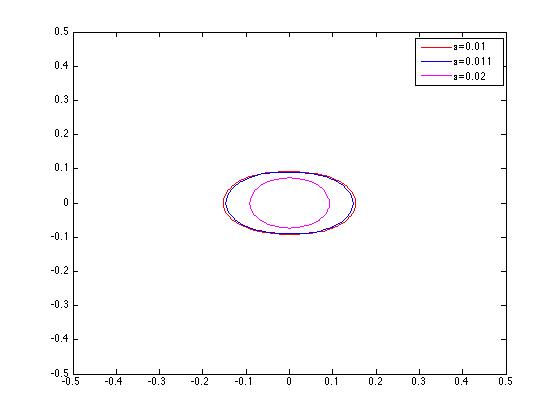}
\includegraphics[height=1.8in,width=2.4in]{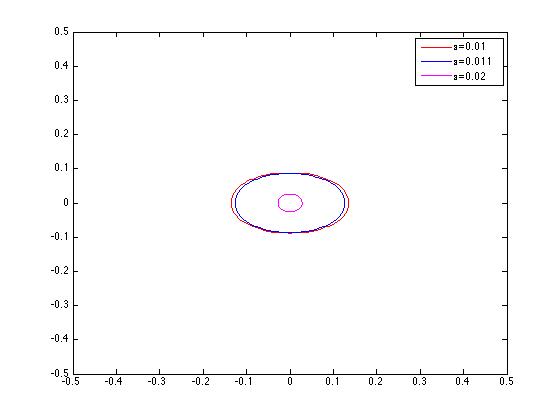}
\caption{Snapshots of the zero-level set of $\bar{u}^{\delta,\epsilon,h,\tau}$ at time 
$t=0.0010, 0.0050, 0.0125, 0.0175$ with $\delta = 0.1$ and $\epsilon = 0.01, 0.011, 0.02$.}
\label{fig_t1_4}
\end{figure}

\newpage
\noindent $\bf{Test\, 2.}$ 
First, we define
\begin{alignat*}{2}
\tanh(y)&:=\frac{e^y-e^{-y}}{e^y+e^{-y}}, &&\qquad \psi_1(x_2):= \frac{-1+\sqrt{0.8x_2+0.04}}{2},\\
\psi_2(x_2)&:= \frac{1-\sqrt{1.92x_2+0.2304}}{2}, &&\qquad \psi_3(x_2):=\frac{-1+\sqrt{-0.8x_2+0.04}}{2}, \\
\psi_4(x_2)&:= \frac{1-\sqrt{-1.92x_2+0.2304}}{2}, &&\qquad \psi_5(x_2):= -\sqrt{\frac{1-0.2451x_2^2}{0.0049}}.
\end{alignat*}
and consider the initial condition $u_0(x_1,x_2)=u_1(3x_1,3x_2)$, where $u_1(x_1,x_2)$ is defined 
by (cf. Test 2 in \cite{FL2014}):

{\small
\begin{equation*}
 u_1(x_1,x_2)=\begin{cases}
 \tanh(\frac{1}{\sqrt{2}\eps}(-\sqrt{(x_1-0.14)^2+(x_2-0.15)^2})), & \mbox{if}\ x_1> 0.14,0\leq x_2<-\frac{5}{12}(x_1-0.5),\\
 \tanh(\frac{1}{\sqrt{2}\eps}(-\sqrt{(x_1-0.14)^2+(x_2+0.15)^2})), & \mbox{if}\ x_1> 0.14,\frac{5}{12}(x_1-0.5)< x_2<0,\\
 \tanh(\frac{1}{\sqrt{2}\eps}(-\sqrt{(x_1+0.3)^2+(x_2-0.15)^2})), & \mbox{if}\ x_1<- 0.3,0\leq x_2<\frac{3}{4}(x_1+0.5),\\
 \tanh(\frac{1}{\sqrt{2}\eps}(-\sqrt{(x_1+0.3)^2+(x_2+0.15)^2})), & \mbox{if}\ x_1<- 0.3,-\frac{3}{4}(x_1+0.5)< x_2<0,\\
 \tanh(\frac{1}{\sqrt{2}\eps}(\sqrt{(x_1-0.5)^2+x_2^2}-0.39)), & \mbox{if}\ x_1>0.14,x_2\geq-\frac{5}{12}(x_1-0.5)\\
 &\ \mbox{or}\ x_2\leq\frac{5}{12}(x_1-0.5),\\
  \tanh(\frac{1}{\sqrt{2}\eps}(\sqrt{(x_1+0.5)^2+x_2^2}-0.25)), &  \mbox{if}\ x_1<-0.3,x_2\geq-\frac{3}{4}(x_1+0.5)\\
 &\ \mbox{or}\ x_2\leq-\frac{3}{4}(x_1+0.5),\\
 \tanh(\frac{1}{\sqrt{2}\eps}(|x_2|-0.15)), & \text{if}\ -0.3\leq x_1\leq0.14,\\
 &\,\, \psi_1(x_2)\leq x_1\leq\psi_2(x_2) \\
 &\,\, \mbox{and}\ \psi_3(x_2)\leq x_1\leq\psi_4(x_2),\\
 \tanh(\frac{1}{\sqrt{2}\eps}(\sqrt{(x_1-0.5)^2+x_2^2}-0.39)), & \mbox{if}\ -0.3\leq x_1\leq0.14,\,\,
  x_1\geq \psi_2(x_2)\\
 &\,\, \mbox{and}\ x_1\geq \psi_5(x_2),\\
 \tanh(\frac{1}{\sqrt{2}\eps}(\sqrt{(x_1-0.5)^2+x_2^2}-0.39)), & \mbox{if}\ -0.3\leq x_1\leq0.14,\,\,
 x_1\geq \psi_4(x_2)\\
 &\,\, \mbox{and}\ x_1\geq \psi_5(x_2),\\
 \tanh(\frac{1}{\sqrt{2}\eps}(\sqrt{(x_1+0.5)^2+x_2^2}-0.25)), &  \mbox{if}\ -0.3\leq x_1\leq0.14,\,\,
 x_1\leq \psi_1(x_2)\\
 &\,\, \mbox{and}\ x_1\leq \psi_5(x_2),\\
 \tanh(\frac{1}{\sqrt{2}\eps}(\sqrt{(x_1+0.5)^2+x_2^2}-0.25)), &  \mbox{if}\ -0.3\leq x_1\leq0.14,\,\,
 x_1\leq \psi_3(x_2)\\
 &\,\, \mbox{and}\ x_1\leq \psi_5(x_2).
 \end{cases}
 \end{equation*}
}

We note that the initial condition is not smooth due to the dumbbell shape of the zero-level set.
Nevertheless, we study the effects of $\delta$ and $\epsilon$ on the evolution of numerical interfaces. 
Figure~\ref{fig_t2_1}--\ref{fig_t2_3} display a few snapshots of the zero-level set of 
the expected value of the numerical solution $\bar{u}^{\delta,\epsilon,h,\tau}$ at several time points
with $\epsilon = 0.01$ and three different noise intensity $\delta = 0.1, 1, 10$. Similar to Test $1$, 
the zero-level set evolves faster and the shape undergoes more changes for larger $\delta$. 
Figure~\ref{fig_t2_4} plots snapshots at four fixed time point of the zero-level set 
of $\bar{u}^{\delta,\epsilon,h,\tau}$ with $\delta = 0.1$ and $\epsilon = 0.01, 0.011, 0.02$. 
Again, it suggests the convergence of the zero-level set at each time point to the stochastic 
mean curvature flow as $\epsilon \rightarrow 0$.

\begin{figure}[thbp]
\centering
\includegraphics[height=1.8in,width=2.4in]{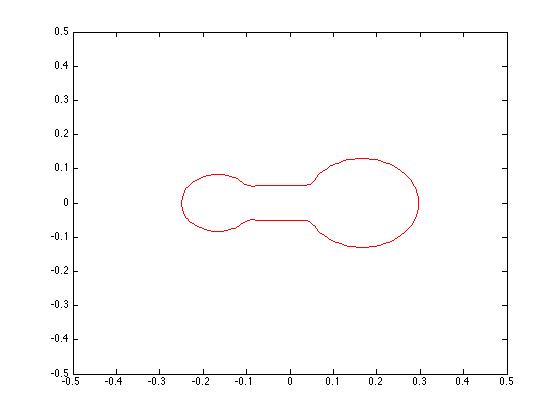}
\includegraphics[height=1.8in,width=2.4in]{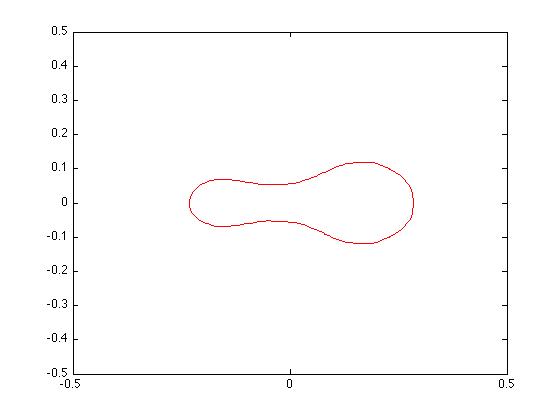}

\includegraphics[height=1.8in,width=2.4in]{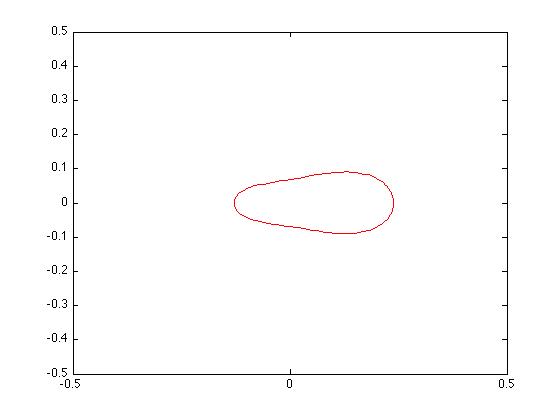}
\includegraphics[height=1.8in,width=2.4in]{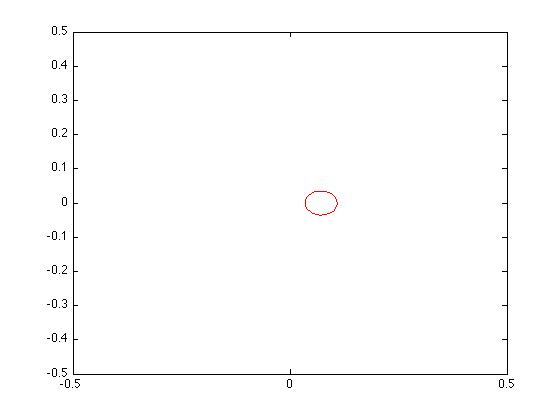}
\caption{Snapshots of the zero-level set of $\bar{u}^{\delta,\epsilon,h,\tau}$ at time 
$t=0, 0.040, 0.200, 0.456$ with $\delta=0.1$ and $\epsilon = 0.01$.}
\label{fig_t2_1}
\end{figure}

\begin{figure}[thbp]
\centering
\includegraphics[height=1.8in,width=2.4in]{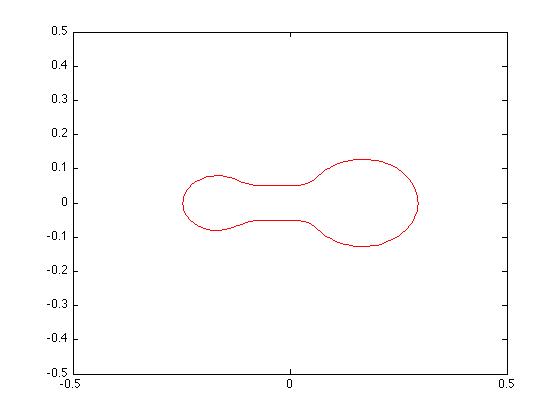}
\includegraphics[height=1.8in,width=2.4in]{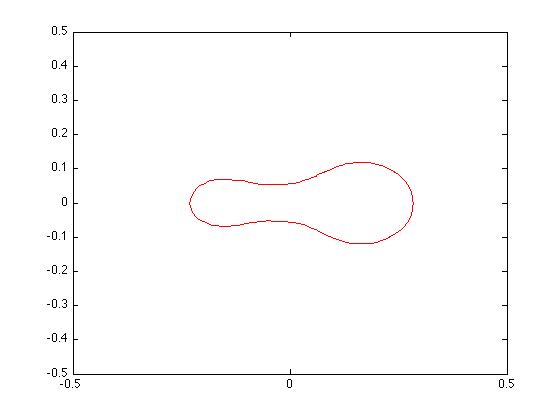}

\includegraphics[height=1.8in,width=2.4in]{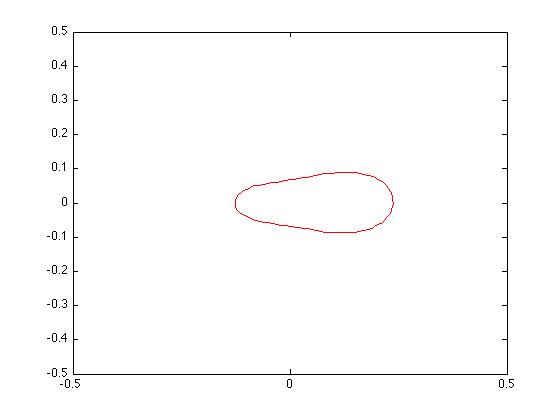}
\includegraphics[height=1.8in,width=2.4in]{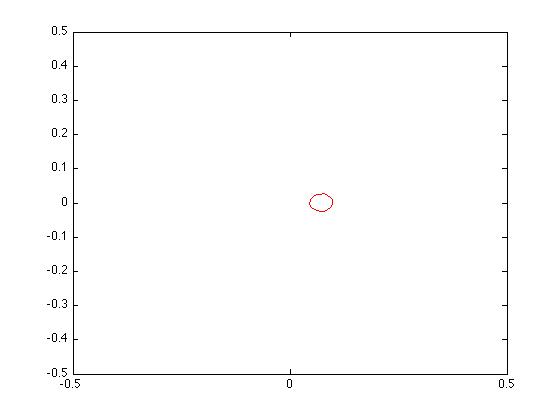}
\caption{Snapshots of the zero-level set of $\bar{u}^{\delta,\epsilon,h,\tau}$ at time 
$t=0.004, 0.040, 0.200, 0.456$ with $\delta=1$ and $\epsilon = 0.01$.}
\label{fig_t2_2}
\end{figure}

\begin{figure}[thbp]
\centering
\includegraphics[height=1.8in,width=2.4in]{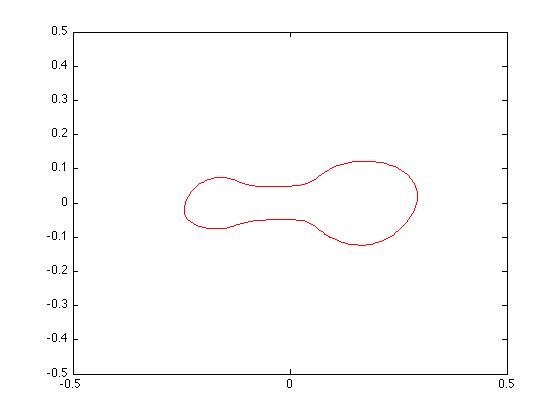}
\includegraphics[height=1.8in,width=2.4in]{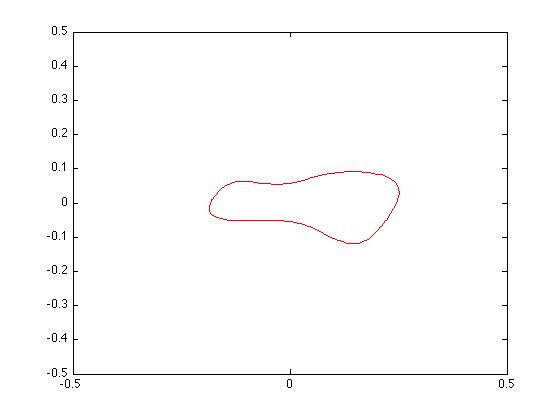}

\includegraphics[height=1.8in,width=2.4in]{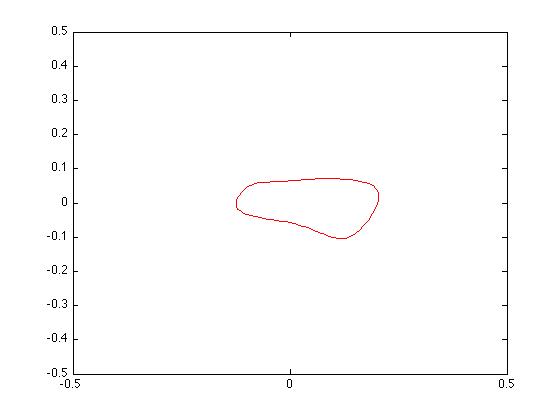}
\includegraphics[height=1.8in,width=2.4in]{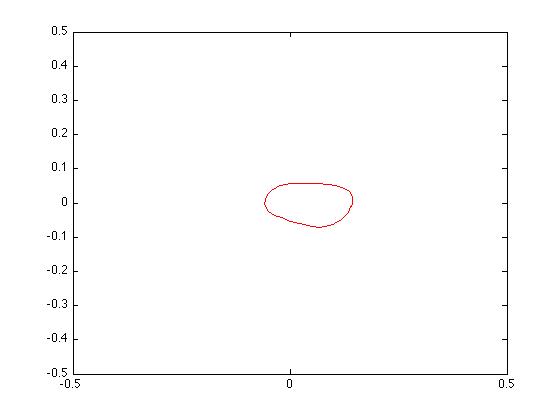}

\includegraphics[height=1.8in,width=2.4in]{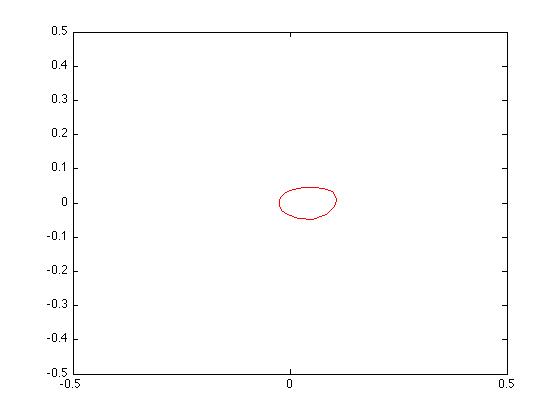}
\includegraphics[height=1.8in,width=2.4in]{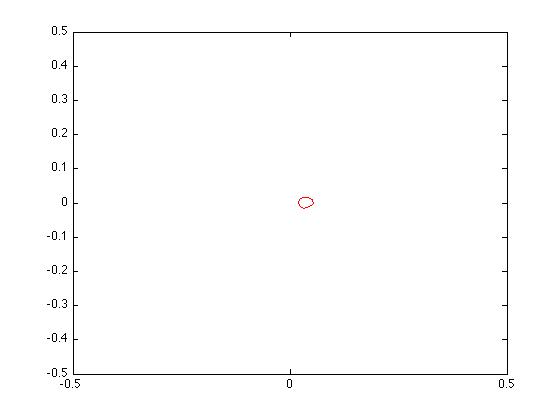}
\caption{Snapshots of the zero-level set of $\bar{u}^{\delta,\epsilon,h,\tau}$ at time 
$t=0.004, 0.040, 0.080, 0.140, 0.180, 0.216$ with $\delta=10$ and $\epsilon = 0.01$.}
\label{fig_t2_3}
\end{figure}

\begin{figure}[thbp]
\centering
\includegraphics[height=1.8in,width=2.4in]{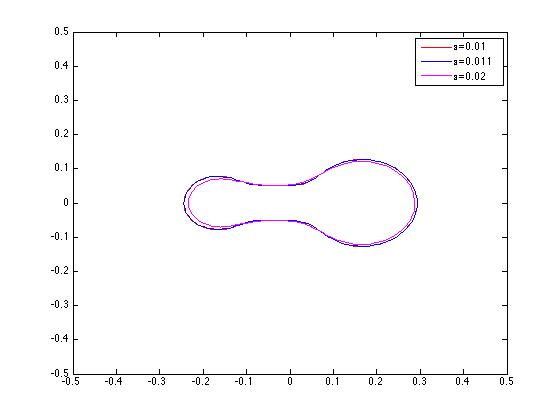}
\includegraphics[height=1.8in,width=2.4in]{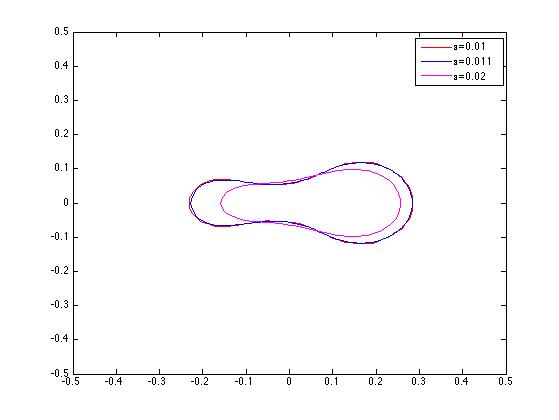}

\includegraphics[height=1.8in,width=2.4in]{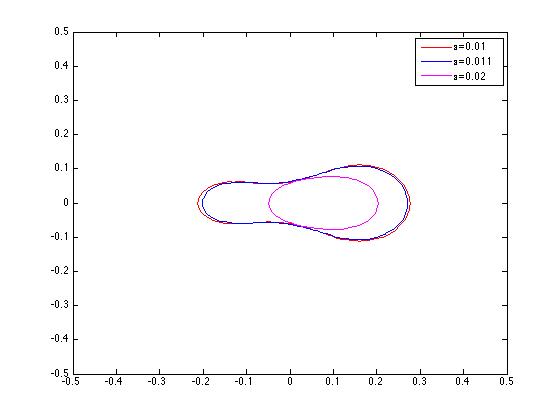}
\includegraphics[height=1.8in,width=2.4in]{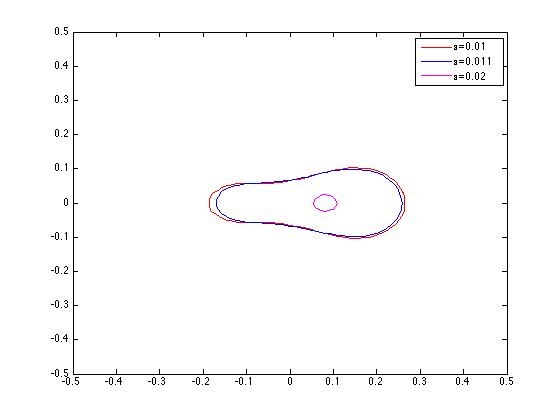}
\caption{Snapshots of the zero-level set of $\bar{u}^{\delta,\epsilon,h,\tau}$ 
at time $t=0.008, 0.040, 0.080, 0.120$.}
\label{fig_t2_4}
\end{figure}



\begin{thebibliography}{99}


\bibitem{AC1979} S. M. Allen and J. W. Cahn, {\em A microscopic theory for antiphase boundary motion and its application to antiphase domain coarsening}, Acta Metall., 27, 1084--1095 (1979).

\bibitem{AMW98}
D.~M. Anderson, G.~B. McFadden, and A.~A. Wheeler,  {\em Diffuse-interface methods in fluid mechanics},
in {\em Annual review of fluid mechanics}, Vol. 30 of {\em Annu. Rev. Fluid Mech.}, pages 139--165. 
Annual Reviews, Palo Alto, CA, 1998.

\bibitem{BDP1995} S. Brassesco, A. De Masi and E. Presutti, {\em Brownian fluctuations of the interface 
in the $D=1$ Ginzburg-Landau equation with noise}, Ann. Inst. H. Poincare Probab. Stat., 31, 81--118 (1995).

\bibitem{BS2008} S. C. Brenner and L. R. Scott, The mathematical theory of finite element methods 
(Third Edition), {\em Springer-Verlag, New York}, 2008.

\bibitem{C1978} P. Ciarlet, The finite element method for elliptic problems. {\em North-Holland, Amsterdam}, 
1978.

\bibitem{ESS1992} L. C. Evans, H. M. Soner and P. E. Souganidis, {\em Phase transitions and generalized motion by mean curvature}, Comm. Pure Appl. Math., 45(9), 1097--1123 (1992).

\bibitem{FHL06} X.~Feng, Y.~He, and C.~Liu, {\em Analysis of finite element approximations of a phase 
field model for two-phase fluids},. Math. Comp., 76(258):539--571, 2007.

\bibitem{FP2003} X. Feng and A. Prohl, {\em Numerical analysis of the Allen-Cahn equation and approximation 
for mean curvature flows}, Numer. Math., 94, 33--65 (2003).

\bibitem{FL2014} X. Feng and Y. Li, {\em Analysis of symmetric interior penalty discontinuous Galerkin 
methods for the Allen-Cahn equation and the mean curvature flow}, IMA J. Numer. Anal., to appear.

\bibitem{FLP2014} X. Feng, Y. Li and A. Prohl, {\em Finite element approximations of the stochastic 
mean curvature flow of planar curves of graphs}, Stoch. PDE: Anal. Comp., 2, 54--83 (2014).


\bibitem{F1995} T. Funaki, {\em The sacling limit for a stochastic PDE and the separation of phases},  
Probab. Theory Relat. Fields, 102, 221--288 (1995).

\bibitem{KKL2007} M. Katsoulakis, G. Kossioris and O. Lakkis, {\em Noise regularization and computations 
for the $1$-dimensional stochastic Allen-Cahn problem}, Interfaces Free Boundaries, 9, 1--30 (2007).

\bibitem{KO1982} K. Kawasaki and T. Ohta, {\em Kinetic drumhead model of interface. I}, Progr. 
Theoret. Phys., 67, 147--163 (1982).

\bibitem{KR2007} N. V. Krylov and B. L. Rozovskii, {\em Stochastic evolution equations}, 
In Stochastic Differential Equations: Theory and Applications: Interdisciplinary Math and Science, 
World Science Publication, Hackensack, vol. 2, pp. 1--69 (2007).

\bibitem{I1993} T. Ilmanen, {\em Convergence of the Allen-Cahn equation to Brakke's motion by mean curvature}, 
J. Differential Geom., 38(2), 417--461 (1993).

\bibitem{NV1997} R. H. Nochetto and C. Verdi, {\em Convergence past singularities for a 
fully discrete approximation of curvature-driven interfaces}, SIAM J. Numer. Anal., 34(2), 490--512 (1997).

\bibitem{LPS2014} G. Lord, C. Powell and T. Shardlow, An introduction to computational stochastic 
PDEs, {\em Cambridge Texts in Applied Mathematics, CUP}, 2014.

\bibitem{DPZ1992} G. Da Prato and J. Zabczyk, Stochastic equations in infinite dimensions, 
{\em Cambridge University Press}, 1992.


\bibitem{P2015} A. Prohl, {\em Strong rates of convergence for a space-time discretization of the stochastic Allen-Cahn equation with multiplicative noise}, submitted.

\bibitem{MS1990} P. Mottoni and M. Schatzman, {\em Development of interfaces in $\mathbb{R}^N$}, Proc. Roy. Soc. Edinburgh sect. A, 116(3--4), 207--220 (1990).

\bibitem{RW2013} M. R\"{o}ger and H. Weber, {\em Tightness for a stochastic Allen-Cahn equation}, Stoch. PDE: Anal. Comp., 1, 175--203 (2013).


\bibitem{S1996} B. Stoth, {\em Convergence of the Cahn-Hilliard equation to the Mullins-Sekerka 
problem in spherical symmetry}, J. Diff. Eqs. 125, 154--183 (1996).

\bibitem{Y1998} N. K. Yip, {\em Stochastic motion by mean curvature}, Arch. Rational Mech. Anal. 144,  
313--355 (1998).

\end{thebibliography}
\end{document}